\newtheorem{claim}{Claim}
\newtheorem{theorem}{Theorem}
\newtheorem{definition}{Definition}
\newtheorem{lemma}{Lemma}
\theoremstyle{remark}
\newtheorem*{remark}{Remark}
\newtheorem*{remarks}{Remarks}
\numberwithin{lemma}{section}
\numberwithin{claim}{section}
\numberwithin{definition}{section}
\numberwithin{proposition}{section}
\numberwithin{equation}{section}
\def\N{\mathbb{N}}
\def\R{\mathbb{R}}
\def\<={\leq}
\def\>={\geq}
\def\bd{\partial}
\def\l{\ell}
\def\diam{\mathrm{diam}}
\def\im{\mathrm{Im}\,}
\newcommand{\set}[1]{\left\lbrace#1\right\rbrace}
\newcommand\inprod[2]{\langle{#1},{#2}\rangle}
\title{An analogue of the Blaschke--Santal\'{o} inequality for billiard dynamics}
\author[1]{Daniel Tsodikovich\thanks{   School of Mathematical Sciences,
   Tel Aviv University, Tel Aviv 69978, Israel.
    \textit{E-mail}: \href{mailto:tsodikovich@tauex.tau.ac.il}{\texttt{tsodikovich@tauex.tau.ac.il}}}}
\date{}
\begin{document}
\maketitle
\begin{abstract}
The Blaschke--Santal\'{o} inequality is a classical inequality in convex geometry concerning the volume of a convex body and that of its dual. 
In this work we investigate an analogue of this inequality in the context of billiard dynamical system:
we replace the volume with the length of the shortest closed billiard trajectory.
We define a quantity called the ``billiard product" of a convex body $K$, which is analogous to the volume product studied in the Blaschke--Santal\'{o} inequality.
In the planar case, we derive an explicit expression for the billiard product in terms of the diameter of the body.
We also investigate upper bounds for this quantity in the class of polygons with a fixed number of vertices.
\end{abstract}

\section{Introduction and main results}\label{section:intro}
The lengths of closed billiard trajectories inside a convex body (convex compact set with non-empty interior) are well studied quantities. 
For example, it is known that these lengths are related to the Dirichlet eigenvalues of the body (see, e.g., \cite{AnderssonK.G1977Tpos, HuangGuan2018Otml}).
The length of a shortest closed billiard trajectory inside a convex domain is also known to be related to symplectic capacities, as was shown, e.g., in \cite{ViterboClaude2000MaIP,Artstein-AvidanShiri2014BfMB}.
This was used in \cite{Artstein_Avidan_2014} to show that a volume-capacity type conjecture by Viterbo implies the well-known Mahler conjecture regarding the volume product of convex bodies.
For convex domains with smooth boundary, the existence of closed billiard orbits with any number of bounce points is well known (see, e.g., \cite{tabachnikov2005geometry}).
For other convex domains, the answer is not so clear.
For example, the existence of any closed billiard orbit in an obtuse triangle is still an open question. 
 For this reason, in what follows, we consider \textit{generalized} billiard trajectories, in which the particle is allowed to pass also through corners of the boundary.
At a corner, the orbit is required to make equal angles with one of the supporting hyperplanes to the boundary (for the precise definition, see Definition \ref{def:genBilliardTrajectory} at Subsection \ref{subsection:shortestBT}).
This way, every obtuse triangle has a closed generalized billiard trajectory, namely, the altitude from the vertex of the obtuse angle to the opposite edge.
We denote by $\alpha(K)$ the length of a shortest closed generalized billiard trajectory inside the convex (not necessarily bounded) set $K$.
 If $K$ does not have a closed billiard trajectory,  we set $\alpha(K)=\infty$.
 In \cite{BezdekDaniel2009Sbt, GhomiMohammad2004Spbt} it was shown that in every convex body $K$ there always exists a closed billiard trajectory, and moreover, there always exists a shortest one.
 In light of this, the quantity $\alpha(K)$ is well defined.
 
We recall the classical construction of the polar body from convex geometry: for a convex body $K\subseteq \R^n$ and $z\in \R^n$, the dual body to $K$ with respect to $z$ is
\[K^z=\set{y\in\R^n\mid\forall x\in K: \inprod{x-z}{y}\leq 1},\]
where $\langle\cdot,\cdot\rangle$ denotes the Euclidean standard inner product.
The point $z$ is referred to as the center of duality.
In this work, we analyze the \textit{billiard product} of a convex body $K\subseteq\R^n$:
\[\beta(K):=\inf\limits_{z\in\R^n} \alpha(K)\alpha(K^z).\]
This product is an analogue of the classical volume product: the infimum of the product of the volume of a convex body and the volume of its dual, where the infimum is taken over all possible centers of duality. 
The volume product is studied extensively in convex geometry.
An upper bound for the volume product was obtained, and is known as the Blaschke--Santal\'{o} inequality  (see, e.g., \cite{Santalo1949,MR670798,blaschke1917affine}).
On the other hand, the lower bound of the volume product is still open in general, and is known as the Mahler conjecture \cite{mahler1939minimalproblem}. 
Other properties of the volume product have been investigated in the past, see for example \cite{KaiserM.J1993Tspo, AlexanderMatthew2019PoMV, MeyerMathieu2011Otvp}.
We remark that another analogue of the volume product was studied in \cite{BucurDorin2016BaMi}.
In that work, the volume product was replaced with the $\lambda_1$ product: the product of the smallest Dirichlet eigenvalue in $K$ and in its dual.

It is not hard to check that the billiard product $\beta(K)$ is invariant under similarities, that is, compositions of isometries and homotheties.
In this work, while some of the results apply to arbitrary dimension, we will focus on the two-dimensional case, where billiard orbits are best understood.
First, we identify the analogue of a ``Santal\'{o} point" in this setting --- a point which minimizes $\alpha(K^z)$.
It is known (see, e.g., \cite{Santalo1949, KaiserM.J1993Tspo}) that a point that minimizes the volume of $K^z$ is a point $z$ for which $K^z$ has the origin as the center of mass.
In our case we get a different result, which we now state.
Recall that the diameter of a convex body $K$, denoted by $\diam(K)$, is the maximal length of a line segment that is contained in $K$.
\begin{theorem}\label{thm:santaloPtIdentity}
Let $K$ be a two-dimensional convex body. 
Then \[\inf\limits_{z\in \R^2} \alpha(K^z)=\frac{8}{\diam(K)},\]
and the infimum is attained when $z$ is the midpoint of a diameter of $K$.
Moreover, for all dimensions, $8 \slash \diam (K)$ is an upper bound for $\inf\limits_{z\in\R^n} \alpha(K^z)$.
\end{theorem}
\begin{remark}
Theorem \ref{thm:santaloPtIdentity} implies, in particular, that for two-dimensional convex bodies $K$,
\[\beta(K)=\frac{8\alpha(K)}{\diam(K)}.\]
\end{remark}

As a result, we derive an analogue of the Blaschke--Santal\'{o} inequality.
Recall that the width of a convex body $K\subseteq \R^n$ in the direction of the vector $v\neq 0$ is the distance between the two supporting hyperplanes to $K$ that are orthogonal to $v$.
The convex body $K$ is said to have \textit{constant width} if it has the same widths in all directions.
\begin{theorem}\label{thm:santaloAnalogue}
If $K\subseteq\R^n$ is a convex body and $B$ is the unit ball, then \[\beta(K)\leq \beta(B)=16.\]
Moreover, if $\beta(K)=16$, then $K$ must have constant width.
\end{theorem}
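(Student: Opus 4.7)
The plan is to combine two upper bounds, each realised by the simplest generalized billiard trajectories (back-and-forth chords between parallel supports): the standard bound $\alpha(K)\le 2w(K)$ on the shortest orbit in terms of the minimum width $w(K)$, and the bound $\inf_{z}\alpha(K^z)\le 8/\diam(K)$ promised in the Remark. The geometric link is a diameter of $K$.

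First I would establish the Remark's bound. Let $p,q\in K$ realise the diameter of $K$, set $z_0=(p+q)/2$, and let $v=(p-q)/\diam(K)$, so that $p-z_0$ and $q-z_0$ are the antipodal vectors $\pm\frac{\diam(K)}{2}v$. For any $y\in K^{z_0}$, the defining constraints $\langle p-z_0,y\rangle\le 1$ and $\langle q-z_0,y\rangle\le 1$ combine to $|\langle v,y\rangle|\le 2/\diam(K)$, so the width of $K^{z_0}$ in direction $v$ is at most $4/\diam(K)$. Consequently, the back-and-forth chord between the two supporting hyperplanes of $K^{z_0}$ perpendicular to $v$ is a generalized $2$-periodic billiard trajectory of length at most $8/\diam(K)$, yielding $\inf_{z\in\R^n}\alpha(K^z)\le 8/\diam(K)$. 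Combined with $\alpha(K)\le 2w(K)$ (from the back-and-forth along a minimum-width double normal),
\[\beta(K)=\alpha(K)\cdot\inf_{z\in\R^n}\alpha(K^z)\le 2w(K)\cdot\frac{8}{\diam(K)}=\frac{16\,w(K)}{\diam(K)}\le 16,\]
where the last step uses $w(K)=\min_v w_v(K)\le\max_v w_v(K)=\diam(K)$. For the ball, $w(B)=\diam(B)=2$ and $\alpha(B)=4$; taking $z=0$ gives $\alpha(B^0)=\alpha(B)=4$, so $\beta(B)\le 16$, and the matching lower bound follows from the explicit description of $B^z$ (an ellipsoid with minor semi-axis $\ge 1$ for $|z|<1$), which gives $\alpha(B^z)\ge 4$.

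For the equality case, $\beta(K)=16$ forces $w(K)/\diam(K)=1$, i.e., $w(K)=\diam(K)$; since $w(K)$ and $\diam(K)$ are the minimum and maximum of the width function on the unit sphere, this forces the width to be constant and $K$ to have constant width. The trickiest step to justify rigorously will be the bound $\alpha(K)\le 2w(K)$ in the generalized-billiard setting: one must run a variational argument at the minimum of the width function on the unit sphere to produce a chord in the minimum-width direction with endpoints on the two parallel supports (a double normal), and then observe that the back-and-forth along this chord satisfies the generalized reflection condition at both endpoints, since perpendicularity to a supporting hyperplane automatically gives the required equal-angle property, even when the endpoint is a corner of $\partial K$.
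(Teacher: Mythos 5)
Your proof is correct and follows essentially the same route as the paper's: the slab bound $\inf_{z}\alpha(K^z)\le 8/\diam(K)$ obtained from the midpoint of a diameter, the double-normal bound on $\alpha(K)$, the explicit ellipsoid-of-revolution computation showing $\beta(B)=16$, and the equality case forcing $w(K)=\diam(K)$. The only differences are cosmetic --- you invoke $\alpha(K)\le 2w(K)$ from the outset, whereas the paper first uses $\alpha(K)\le 2\,\diam(K)$ and brings in the width bound only for the rigidity step; do phrase the slab step via the \emph{minimal} width of $K^{z_0}$ and its double normal (as you yourself note at the end), since the chord joining the two support points in the particular direction $v$ need not be perpendicular to those hyperplanes.
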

\begin{remarks}
\begin{enumerate}
\item In the planar case we show moreover that for any convex body $K$ of constant width $\beta(K)=16$.
This is in contrast to the volume product, for which the only maximizers are ellipsoids.
\item It is conjectured that also in arbitrary dimension, the shortest billiard trajectories in bodies of constant width have two bounce points.
If this conjecture is true, and if $\frac{8}{\diam(K)}$ is still a lower bound for $\alpha(K^z)$ in arbitrary dimension, then the previous remark applies to arbitrary dimension, and gives the following characterization of bodies with constant width in $\R^n$: these are exactly the bodies $K$ for which $\beta(K)=16$.
\item It is natural to also consider lower bounds, and look for Mahler-like inequalities. 
In this case, it is simple to see that the billiard product can be arbitrarily small: if $K$ is a thin and long rectangle, then $\beta(K)$ can get arbitrarily close to zero.
\end{enumerate}
\end{remarks}
We turn to investigate the maximum of the billiard product over the set of polygons with a fixed number of vertices. 
For the volume (area) product, it is known (see e.g. \cite{MeyerMathieu2011Otvp, AlexanderMatthew2019PoMV}) that the maximal volume product is attained for regular polygons.
We find the maximizers in the class of triangles:
\begin{theorem}\label{thm:optimalTriangle}
For a triangle $T$, one has $\beta(T)\leq 12$.
Moreover, equality is attained if and only if $T$ is a regular triangle.
\end{theorem}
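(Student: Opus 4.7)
My plan is to reduce the claim to an elementary trigonometric inequality via the identity $\beta(T) = 8\alpha(T)/\diam(T)$ from the remark following Theorem \ref{thm:santaloPtIdentity}. Since for a triangle the diameter equals the length of the longest side, the claim $\beta(T) \leq 12$ is equivalent to $\alpha(T) \leq \frac{3}{2}\diam(T)$. I would split the argument into two cases according to whether $T$ is acute.

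In the non-acute case, let $a$ be the longest side and $A$ the opposite vertex, so $\angle A \geq \pi/2$. The altitude from $A$, traversed twice, is a closed generalized billiard orbit -- a normal reflection at the foot of the altitude and a corner reflection at $A$ -- so $\alpha(T) \leq 2h$, where $h$ is the altitude length. Placing $B=(0,0)$, $C=(a,0)$, $A=(x,y)$ with $y>0$, the condition $\angle A \geq \pi/2$ is equivalent to $y^2 \leq x(a-x) \leq a^2/4$, so $h = y \leq a/2$, yielding $\alpha(T) \leq a$ and therefore $\beta(T) \leq 8 < 12$. In particular, equality in the theorem can only occur in the acute case.

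In the acute case I use the Fagnano orbit -- the perimeter of the orthic triangle -- which is a valid periodic billiard trajectory because the altitudes of $T$ bisect the angles of the orthic triangle. Its length equals $L_F = a\cos A + b \cos B + c \cos C$, so $\alpha(T) \leq L_F$. Taking $A$ to be the largest angle (hence $A \in [\pi/3, \pi/2)$), the law of sines together with the identities $\sin 2A + \sin 2B + \sin 2C = 4\sin A \sin B \sin C$ and $2\sin B \sin C = \cos(B-C) + \cos A$ (both valid when $A+B+C=\pi$) simplify $L_F$ to
\[
L_F = a\bigl[\cos(B-C) + \cos A\bigr].
\]
Hence $L_F \leq \frac{3}{2}a$ reduces to $\cos(B-C) + \cos A \leq \frac{3}{2}$, which holds since $\cos(B-C) \leq 1$ and $\cos A \leq \frac{1}{2}$ (because $A \geq \pi/3$). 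Equality forces $B = C$ and $A = \pi/3$, hence $T$ is equilateral.

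For the converse direction of the equality case it remains to verify that the equilateral triangle of side $a$ actually attains $\alpha(T) = \frac{3a}{2}$, i.e., that no closed generalized billiard orbit is strictly shorter than the Fagnano orbit. This is the main obstacle in the plan: the upper bound $\alpha \leq \frac{3a}{2}$ is immediate from the Fagnano construction, but the matching lower bound requires a separate argument, most naturally carried out by unfolding the equilateral billiard into the standard triangular tiling of the plane and checking that the Fagnano orbit corresponds to the shortest closed geodesic there. Once this is done, $\beta(T) = 12$ is achieved exactly when $T$ is equilateral, completing the iff.
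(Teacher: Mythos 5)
Your proof is correct, but it follows a genuinely different route from the paper's. The paper does not split into acute and obtuse cases at all: it normalizes $\diam(T)=1$, extends the edge adjacent to the smallest angle $\phi$ until it has length $1$, thereby enclosing $T$ in an isosceles triangle $\tilde T$ with the same diameter, and then uses monotonicity of $\alpha$ under inclusion together with the formula $\alpha(\tilde T)=2\cos\frac{\phi}{2}\sin\phi$ to reduce everything to checking that the cubic $32\cos^2\frac{\phi}{2}\sin\frac{\phi}{2}$ (in $t=\sin\frac{\phi}{2}$) is increasing for $\phi\leq\frac{\pi}{3}$. Your argument instead works directly with the exact value of $\alpha$ in each case: the bound $h\leq a/2$ in the non-acute case (giving the stronger conclusion $\beta(T)\leq 8$ there, consistent with the paper's right-isosceles example in Subsection \ref{subsec:steinerSymm}), and the identity $L_F=a\bigl[\cos(B-C)+\cos A\bigr]$ in the acute case. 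Your route buys a cleaner and more explicit equality analysis ($B=C$ and $A=\pi/3$ drop out immediately), whereas the paper's reduction-to-isosceles trick is shorter and reuses the same containment idea that drives Lemma \ref{lem:reductionToIsosceles}. One remark: the step you flag as ``the main obstacle'' --- verifying that the equilateral triangle actually attains $\alpha(T)=\frac{3a}{2}$ --- is not an obstacle in the context of this paper, since Lemma \ref{lem:alphaTriangle} already states that for an acute triangle the Fagnano orbit \emph{is} the (unique) shortest closed generalized billiard trajectory, so $\alpha(T)=L_F$ exactly and no unfolding argument is needed; the value $\beta(R_3)=12$ is also computed explicitly in the paragraph preceding the theorem.
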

We remark that in general the regular polygons are not those that maximize the billiard product.
In Subsection \ref{subsec:billiardPolygon} we show this for regular polygons with even number of vertices.
We also discuss there a potential maximizer for $\beta$ in the class of quadrilaterals.

It is known (see, e.g., \cite[Proposition 1.1.15]{Artstein-AvidanShiri2015AGAP}) that the Steiner symmetrization can be used to prove the Blaschke--Santal\'{o} inequality.
In light of this, it is natural to consider a similar approach here.
We show that while in general Steiner symmetrization may decrease the billiard product, the Steiner symmetrization of a triangle in the direction of an altitude always increases the billiard product.
As a consequence of that, we will obtain an alternative proof of Theorem \ref{thm:optimalTriangle}, using the Steiner symmetrization.
\medskip

\textbf{Structure of the paper:} In Section \ref{section:bg} we recall some known facts about the length of a shortest closed billiard trajectory inside a convex body, and about the dual of a convex body. 
In Section \ref{section:santalopt} we prove Theorem \ref{thm:santaloPtIdentity}, and find the analogue of a Santal\'{o} point in our setting.
In Section \ref{section:polygonOptimization} we discuss upper bounds for the billiard product.
In Subsection \ref{subsec:globalUpperBound} we prove Theorem \ref{thm:santaloAnalogue}.
In Subsection \ref{subsec:billiardPolygon} we prove Theorem \ref{thm:optimalTriangle}, and give some heuristics about an optimal quadrilateral.
Finally, in Subsection \ref{subsec:steinerSymm} we discuss the effect of the Steiner symmetrization on the billiard product, and give an alternative proof of Theorem \ref{thm:optimalTriangle}.
\section*{Acknowledgments}
This paper is part of the author's Ph.D. thesis, being carried out under the joint supervision of Professor Misha Bialy and Professor Yaron Ostrover at Tel Aviv University.
I would like to thank Professor Roman Karasev for his remarks.
I would also like to thank Itai Bar-Deroma, Arnon Chor, Daniel Hadas, and Leonid Vishnevsky for their comments and discussions.
The author is supported by ISF grants 580/20, 667/18, and by DFG grant MA-2565/7-1 within the Middle East Collaboration Program.
\section{Preliminaries}\label{section:bg}
In this section we review some standard well-known facts about the length of a shortest closed billiard trajectory inside a  convex body, and about the construction of the polar dual to a convex body.
\subsection{Shortest billiard trajectory}\label{subsection:shortestBT}
In this subsection we will present some known facts about the length of a shortest closed billiard trajectory inside a convex body $K\subseteq\R^n$. 
We begin with the definition of this notion.
Recall that a \textit{supporting hyperplane} to a convex closed set $K$ is a hyperplane $H$ which has non-empty intersection with $K$, and for which $K$ is contained in either of the closed half-spaces determined by $H$.
We start from the following classical definition (cf. \cite{GhomiMohammad2004Spbt, BezdekDaniel2009Sbt}).
 \begin{definition}\label{def:genBilliardTrajectory}
 Let $K\subseteq\R^n$ be a convex closed set.
 A piecewise linear curve $\gamma$ contained in $K$ is called a \textup{generalized billiard trajectory of $K$}, if the points where $\gamma$ is not smooth are all in $\bd K$, and at each such point $\gamma(t)$ there exists a supporting hyperplane $H$ to $K$ for which:
 \begin{enumerate}
 \item The normal to $H$ and the one-sided derivatives of $\gamma$ at the point $\gamma(t)$ are coplanar.
 \item The one-sided derivatives of $\gamma$ at the point $\gamma(t)$ make equal angles with $H$.
 \end{enumerate}
 \end{definition}
For a convex body $K\subseteq \R^n$, we denote by $\alpha(K)$ the length of its shortest closed billiard trajectory.
In \cite{GhomiMohammad2004Spbt} Ghomi provided a lower bound for $\alpha(K)$ in terms of the inradius of $K$, that is, the largest radius of a ball contained in $K$.
In \cite{BezdekDaniel2009Sbt}, Bezdek and Bezdek investigated the shortest length of a closed generalized billiard trajectory from a different perspective. 
They gave the following characterization of the shortest length of a billiard trajectory.
 For $m\in\N$, let $P_m(K)$ denote the set of all polygonal paths with $m$ vertices that cannot be translated into the interior of $K$.
For a polygonal path $P$ with vertices $q_1,...,q_m$ , let $\l(P)$ denote its perimeter:
\[\l(P)=\sum_{i=1}^m |q_{i+1}-q_i|,\]
where the indices are considered cyclically, $q_{m+1}=q_1$, and $|\cdot|$ denotes the Euclidean norm. 
It was proved in \cite{BezdekDaniel2009Sbt} that
\begin{equation}\label{eq:bezdekbezdekresult}
\alpha(K)=\min\limits_{m\leq n+1}\min\limits_{P\in P_m(K)}\l(P),
\end{equation}
and furthermore, any polygonal path that minimizes the right-hand side is a translate of a shortest billiard orbit of $K$. 
This implies, in particular, the following \textit{monotonicity property} for $\alpha$: if $K\subseteq T$ are two convex bodies, then $\alpha(K)\leq \alpha(T)$ (since if a polygonal path cannot be translated into the interior of $T$, then it cannot be translated into the interior of $K$, so $\alpha(T)$ is obtained by minimizing over a smaller set). The monotonicity property was also shown in \cite{Artstein-AvidanShiri2014BfMB} with a different method.
This result also implies that a shortest closed billiard trajectory for a convex body $K\subseteq \R^n$ can be chosen to have at most $n+1$ bounce points.
We will call a closed billiard orbit with $m$ bounce points an $m$-orbit.
We recall how the shortest billiard trajectory in a triangle is determined.
This was addressed in \cite[Proposition 4.1]{AlkoumiNaeem2015Scbo}.
For completeness, we restate this result, and we also include a convenient formula for the Fagnano orbit (the orbit connecting the three feet of the altitudes of an acute triangle) which is used in sections \ref{section:santalopt} and \ref{section:polygonOptimization}.
\begin{lemma}\label{lem:alphaTriangle}
Let $T\subseteq\R^2$ be a triangle. 
If $T$ is obtuse (or has a right angle), then $\alpha(T)$ is a 2-orbit realized by the altitude from the obtuse (or right) angle.
If $T$ is acute, then $\alpha(T)$ is a 3-orbit (the so-called Fagnano orbit) that connects the feet of the altitudes of $T$. 
In this case, $\alpha(T)=2h \sin\theta$, where $h$ is the length of any of the altitudes of\, $T$, and $\theta$ is the angle at the vertex from which that altitude is dropped.
In both cases, these are the only orbits that realize $\alpha(T)$.
\end{lemma}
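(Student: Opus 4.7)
The strategy is to invoke the Bezdek--Bezdek characterization \eqref{eq:bezdekbezdekresult}, which in dimension $n=2$ expresses
\[
\alpha(T)=\min\bigl(\alpha_2(T),\alpha_3(T)\bigr),\qquad \alpha_m(T):=\inf_{P\in P_m(T)}\l(P),
\]
and to compute $\alpha_2(T)$ and $\alpha_3(T)$ separately before comparing them. Throughout I write $h_A,h_B,h_C$ for the altitudes from vertices $A,B,C$ and $a,b,c$ for the opposite side lengths.

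For $\alpha_2(T)$: a closed $2$-path is a segment traversed twice, with $\l(P)=2|q_2-q_1|$, and its failure to fit into $\mathrm{int}(T)$ by translation is equivalent to its length being at least the maximum chord length $c(v,T)$ of $T$ parallel to its direction $v$. Hence $\alpha_2(T)=2\min_v c(v,T)$. A sweep argument along $v^\perp$ identifies $c(v,T)$ as the length of the chord parallel to $v$ through the vertex of $T$ which is ``middle'' in the direction perpendicular to $v$, ending on the opposite side. Minimizing this length over the admissible range of $v$ at a fixed vertex $V$ recovers the altitude $h_V$ provided its foot lies on the opposite side of $T$. This condition holds at every vertex of an acute triangle, giving $\alpha_2(T)=2\min(h_A,h_B,h_C)$. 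For a right or obtuse $T$ with distinguished angle at $C$, it holds only at $C$; the chords through the acute vertices are bounded below by the two adjacent sides of $T$, which exceed $h_C$, so $\alpha_2(T)=2h_C$. In both cases the only extremal $2$-paths are the altitudes whose feet fall on the opposite side.

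For $\alpha_3(T)$: the Bezdek--Bezdek condition on a triangular $3$-path matches, up to translation, the classical condition of being inscribed in $T$ with a vertex on each side. By Fagnano's theorem (proved via the Schwarz reflection trick), the minimum-perimeter inscribed triangle in an acute $T$ is the orthic triangle, with perimeter
\[
a\cos A+b\cos B+c\cos C \;=\; 2h_V\sin V \qquad (V\in\{A,B,C\}),
\]
where the equality follows from $h_V=b\sin C=c\sin B$ together with the law of sines $a=2R\sin A$ in a short trigonometric computation (both sides equal $4R\sin A\sin B\sin C$). For right or obtuse $T$ at $C$, the orthic construction degenerates: the infimum of perimeters of inscribed triangles is $2h_C$ and is only attained in the limit as two inscribed vertices collapse to $C$ and the third to the foot of the altitude from $C$.

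Comparing the two quantities: in the acute case, each angle $\theta$ of $T$ satisfies $\sin\theta<1$, so $2h_V\sin V<2h_V$ and therefore $\alpha_3(T)<\alpha_2(T)$; the unique minimizer of $\alpha(T)$ is the Fagnano orbit, of length $2h\sin\theta$ as claimed. In the right or obtuse case, $\alpha_2(T)=2h_C$ is attained by the altitude from $C$ while $\alpha_3(T)$ is only approached, so $\alpha(T)=2h_C$ is uniquely realized by this altitude, consistent with the formula for $\theta=\pi/2$. The main technical obstacle is translating the Bezdek--Bezdek non-translatability condition on 3-paths into the classical inscribed-triangle framework, and in particular ensuring in the right/obtuse case that no exotic 3-path has perimeter strictly below $2h_C$; this is what guarantees that $\alpha(T)$ is realized uniquely by the altitude from the right/obtuse vertex and not by any hidden configuration.
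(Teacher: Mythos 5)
Your route is genuinely different from the paper's: you try to evaluate $\min_{P\in P_2(T)}\ell(P)$ and $\min_{P\in P_3(T)}\ell(P)$ separately from first principles and then compare, whereas the paper leans on the structural half of the Bezdek--Bezdek theorem (every minimizer of \eqref{eq:bezdekbezdekresult} is a translate of a shortest billiard orbit) together with two cited classical facts: a triangle carries a periodic billiard $3$-orbit if and only if it is acute, in which case the Fagnano orbit is the unique one, and that orbit has length $2\,\mathrm{area}(T)/R$, which the law of sines converts to $2h\sin\theta$ in one line. Your identity $a\cos A+b\cos B+c\cos C=4R\sin A\sin B\sin C=2h_V\sin V$ and the comparison $\alpha_3<\alpha_2$ in the acute case (via $\sin\theta<1$ and the vertex-independence of $2h_V\sin V$) are correct and would be a nice self-contained substitute for the citations, \emph{if} the quantities $\alpha_2,\alpha_3$ were what you compute them to be.

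That is where the gap sits, and it is the one you flag yourself. The set $P_3(T)$ consists of \emph{all} closed $3$-gonal paths in the plane that cannot be translated into $\mathrm{int}(T)$; such a path need not have a vertex on each side of $T$, need not be inscribed, and need not even be contained in $T$. So the assertion that the Bezdek--Bezdek condition ``matches, up to translation, the classical condition of being inscribed with a vertex on each side'' is false for general elements of $P_3(T)$, and Fagnano's theorem does not by itself give $\min_{P\in P_3(T)}\ell(P)$ or rule out an exotic $3$-path of perimeter below $2h_C$ in the obtuse case. What is true is that the \emph{minimizers} of \eqref{eq:bezdekbezdekresult} are translates of billiard orbits, hence inscribed --- but that is exactly the part of Bezdek--Bezdek the paper invokes, and once you invoke it the detour through the inscribed-triangle problem is unnecessary: the overall minimizer is a billiard $2$- or $3$-orbit, the only $3$-orbits are Fagnano orbits of acute triangles, and the only $2$-orbit contained in an obtuse or right triangle is the altitude from the distinguished vertex. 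Without that structure theorem your evaluation of $\alpha_3(T)$ and both uniqueness claims are unsupported. A secondary slip: in the obtuse case the chords through an acute vertex $A$ ending on $BC$ are bounded below by the altitude $h_A$ (not by the adjacent side lengths), with $h_A\geq h_C$ because $C$ faces the longest side; the conclusion $\alpha_2(T)=2\min(h_A,h_B,h_C)$, i.e.\ twice the minimal width, is nonetheless correct.
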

\begin{proof}
The fact that the shortest billiard trajectory in $T$ is a 3-orbit if and only if $T$ is acute follows from well-known classical  results (see, for example, \cite[Proposition 9.4.1.3]{BergerMarcel1987G} or \cite[Lemmas 2.1,2.2]{AlkoumiNaeem2015Scbo}).
If $T$ is an obtuse (or right) triangle, then the shortest billiard trajectory cannot be a 3-orbit, and by the results of \cite{BezdekDaniel2009Sbt}, it then must be a 2-orbit. 
On the other hand, a 2-orbit must be an altitude between a vertex and the opposite side, and the only altitude that is contained inside $T$ is the one dropped from the obtuse (or right) angle.

If $T$ is acute, then the unique  billiard 3-orbit in $T$ is the Fagnano orbit.
We use the following classical formula for the length of the Fagnano orbit \cite[p. 191]{johnson1929modern}: $\alpha(T)=2\,\mathrm{area}(T)\slash R$, where $R$ is the circumradius of $T$.
By the sine theorem, $2R=a\slash\sin\theta$, where $a$ is a side of $T$ and $\theta$ is the angle opposite to this side.
Also, $\mathrm{area}(T)=ha\slash 2$, where $h$ is the altitude dropped from the angle $\theta$ to the side $a$.
As a result, \[\alpha(T)=ha\Big\slash\frac{a}{2\sin\theta}=2h\sin\theta,\] which is the required result.
\end{proof}

In what follows, we also require some understanding of billiard orbits in unbounded sets.
Some unbounded convex sets, like strips, can have closed billiard orbits, but not all unbounded convex sets have closed billiard orbits (cones are counter-examples).
While the results of \cite{BezdekDaniel2009Sbt} were formulated only for convex bodies, one can also use them to deduce something about unbounded convex sets.
\begin{lemma}\label{lem:unboundedImpossible}
Let $K\subseteq\R^n$ be a closed convex set with non-empty interior (maybe unbounded). 
For $R>0$, let $K_R$ be the intersection of $K$ with the ball of radius $R$ around the origin.
If $\set{\alpha(K_R)\mid R>0}$ is unbounded, then $K$ does not contain a closed billiard orbit and hence, $\alpha(K)=\infty$.
\end{lemma}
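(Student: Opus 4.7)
My plan is to argue by contrapositive: if $K$ contains a closed generalized billiard orbit, then $\alpha(K_R)$ stays bounded as $R$ grows. Suppose $\gamma\subseteq K$ is a closed generalized billiard trajectory of some finite length $L$. Being a closed piecewise linear curve, $\gamma$ is compact, so there exists $R_0>0$ with $\gamma$ contained in the \emph{open} ball of radius $R_0$ about the origin. I claim that for every $R>R_0$, this same curve $\gamma$ qualifies as a closed generalized billiard orbit of the bounded body $K_R$. This immediately forces $\alpha(K_R)\leq L$ for all such $R$, contradicting the assumption that $\set{\alpha(K_R)\mid R>0}$ is unbounded.

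To verify the claim I check the two requirements of Definition~\ref{def:genBilliardTrajectory} for $\gamma$ viewed inside $K_R$. First, $\gamma\subseteq K$ together with $\gamma\subseteq B_{R_0}\subseteq B_R$ gives $\gamma\subseteq K\cap B_R=K_R$. Second, let $p$ be a bounce (non-smooth) point of $\gamma$; by hypothesis $p\in\bd K$, and since $|p|<R_0<R$ the point $p$ lies strictly inside the ball of radius $R$. A small enough neighborhood of $p$ is thus contained in that ball, and since it meets both $K$ and its complement it also meets both $K_R$ and its complement, so $p\in\bd K_R$. Moreover, any supporting hyperplane $H$ to $K$ at $p$ with which $\gamma$ makes equal angles remains a supporting hyperplane to $K_R$ at $p$: $K_R\subseteq K$ lies in the same closed half-space bounded by $H$, and $p\in H\cap K_R$ gives the required non-empty intersection. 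Hence the equal-angles condition is preserved verbatim.

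The one subtlety — and what I would treat as the main obstacle — is ensuring that the bounce points do not end up on $\bd B_R$: a bounce point lying on the sphere $\bd B_R$ could inherit extra supporting hyperplanes from the spherical part of $\bd K_R$, and the equal-angles condition would then have to be rechecked against a supporting hyperplane to $K_R$ rather than to $K$. Choosing $R_0$ from the \emph{open}-ball bound on the compact curve $\gamma$ and then taking $R>R_0$ strictly sidesteps this issue and keeps the argument routine.
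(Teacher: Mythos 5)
Your proposal is correct and is essentially the paper's own proof: both arguments take a closed generalized billiard orbit of $K$ of length $L$, enclose it in $K_R$ for $R$ large, and observe that a supporting hyperplane to $K$ at a bounce point (which lies strictly inside the ball of radius $R$) is still a supporting hyperplane to $K_R$ there, so the orbit is a billiard orbit of $K_R$ and $\alpha(K_R)\leq L$. The one point you leave implicit is that the bound for $R>R_0$ alone does not yet contradict unboundedness of $\set{\alpha(K_R)\mid R>0}$, since a priori the large values could occur at small radii; the paper closes this with the monotonicity of $\alpha$ under inclusion (for $R\leq R_0$ one has $K_R\subseteq K_{R_0+1}$, hence $\alpha(K_R)\leq\alpha(K_{R_0+1})\leq L$), which is a one-line addition to your write-up.
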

\begin{proof}
The proof is by contradiction. 
Suppose that $q_1,...,q_m$ is a closed generalized billiard orbit of $K$.
Denote its perimeter by $L$.
Then there exists $R_0>0$ such that for $R>R_0$ all of the points $q_1,...,q_m$ are in $K_R$.
Since these points form a generalized billiard trajectory, the inner angle bisector at each $q_i$ is orthogonal to one of the supporting hyperplanes of $K$ at $q_i$.
But a supporting hyperplane to $K$ at $q_i$ is also a supporting hyperplane to $K_R$ at $q_i$, so these points form a generalized billiard orbit in $K_R$, and hence $\alpha(K_R)\leq L$. 
Since $\alpha(K_R)$ is monotone with respect to $R$, it follows that $\alpha(K_R)\leq L$ holds also for $R\leq R_0$, and we get a contradiction to the assumption that $\set{\alpha(K_R)\mid R>0}$ is unbounded.
Consequently, $K$ does not contain a closed generalized billiard orbit.
\end{proof}

In what follows, we also use the fact that $\alpha$ is continuous with respect to the Hausdorff topology.
This fact can be derived from \cite[Theorem 2.13]{Artstein-AvidanShiri2014BfMB}, which relates $\alpha(K)$ to the Ekeland-Hofer-Zehnder capacity of the product of $K$ and a ball.
For completeness, we give an alternative proof, that relies on the characterization of $\alpha$ from \cite{BezdekDaniel2009Sbt}.
\begin{lemma}\label{lem:alphaCont}
The function $K\mapsto\alpha(K)$ is continuous with respect to the Hausdorff topology on the space of convex bodies in $\R^n$.
\end{lemma}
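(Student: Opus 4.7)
The plan is to invoke the Bezdek-Bezdek characterization \eqref{eq:bezdekbezdekresult} and establish upper and lower semicontinuity of $\alpha$ under Hausdorff convergence $K_j\to K$ separately.

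For lower semicontinuity, $\alpha(K)\leq\liminf_j\alpha(K_j)$: I pass to a subsequence realizing the liminf, which I may assume to be finite. For each $j$, let $P_j$ be an optimal polygonal path with $m_j\leq n+1$ vertices and $\l(P_j)=\alpha(K_j)$. Passing to further subsequences, $m_j\equiv m$ is constant; translating so that one vertex of $P_j$ lies at the origin, the uniform bound on perimeters confines all vertices of $P_j$ to a common ball, so I extract a convergent subsequence $P_j\to P$ vertex-by-vertex, with $\l(P)=\lim\l(P_j)$. The crux is to show $P\in P_m(K)$. Suppose otherwise, that $P+v\subset\mathrm{int}(K)$ for some $v\in\R^n$. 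Then $P+v$ is a compact subset of $\mathrm{int}(K)$; I will use the standard fact (provable via the uniform convergence $h_{K_j}\to h_K$ of support functions on the unit sphere) that every such compact set is contained in $\mathrm{int}(K_j)$ for all sufficiently large $j$. Combined with $P_j+v\to P+v$ in Hausdorff distance, this would force $P_j+v\subset\mathrm{int}(K_j)$ eventually, contradicting $P_j\in P_m(K_j)$.

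For upper semicontinuity, $\limsup_j\alpha(K_j)\leq\alpha(K)$: invoking translation invariance, I may assume $0\in\mathrm{int}(K)$, so $rB\subset K$ for some $r>0$. Then, if $K_j\subset K+\epsilon_jB$ with $\epsilon_j\to 0$, one has $K_j\subset K+(\epsilon_j/r)K=(1+\epsilon_j/r)K$, producing a sequence $\delta_j\to 0^+$ with $K_j\subset(1+\delta_j)K$. Fix a minimizer $P\in P_m(K)$ with $\l(P)=\alpha(K)$. I claim $(1+\delta_j)P\in P_m(K_j)$: indeed, if $(1+\delta_j)P+v\subset\mathrm{int}(K_j)\subset(1+\delta_j)\mathrm{int}(K)$, then dividing by $(1+\delta_j)$ produces $P+v/(1+\delta_j)\subset\mathrm{int}(K)$, contradicting $P\in P_m(K)$. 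Hence $\alpha(K_j)\leq\l((1+\delta_j)P)=(1+\delta_j)\alpha(K)\to\alpha(K)$.

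I expect the main obstacle to be the upper-semicontinuity step: the non-translatability of $P$ into $\mathrm{int}(K)$ does not automatically transfer to $K_j$ when $K_j$ is slightly larger than $K$ in some directions, so some mechanism to ``inflate'' $P$ is needed. The homothety $P\mapsto(1+\delta_j)P$ centered at an interior point of $K$ (taken as the origin) resolves this cleanly, at the cost of the harmless normalization $0\in\mathrm{int}(K)$. The remaining ingredients---uniform boundedness of minimizers, the Bolzano-Weierstrass extraction, and the support-function fact about compact subsets of interiors---are routine.
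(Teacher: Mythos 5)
Your proof is correct, but it follows a genuinely different route from the paper's. The paper proves the stronger quantitative statement that $\alpha$ is $2(n+1)$-Lipschitz for the Hausdorff metric: by monotonicity it suffices to show $\alpha(T+rB)\leq\alpha(T)+2r(n+1)$, and this is done by an explicit construction --- the bounce points $q_i$ of a shortest trajectory of $T$ are pushed outward to $q_i+rn_i$ along the outer unit normals, the perimeter increases by at most $2rm\leq 2r(n+1)$, and non-translatability of the new configuration into the interior of $T+rB$ is deduced from the reflection law, which supplies the relation $\sum_i\lambda_i n_i=0$ with all $\lambda_i>0$. Your argument instead splits continuity into two semicontinuity statements and never touches the reflection law: upper semicontinuity comes from the homothety trick $K_j\subseteq(1+\epsilon_j/r)K$, which requires normalizing an inradius and therefore yields the multiplicative estimate $\alpha(K_j)\leq(1+\epsilon_j/r)\alpha(K)$, whose constant degenerates for thin bodies, whereas the paper's additive bound is uniform over all convex bodies; lower semicontinuity comes from compactness of minimizing configurations (finitely many possible vertex counts, uniformly bounded perimeters) together with the standard fact that a compact subset of $\mathrm{int}(K)$ is eventually contained in $\mathrm{int}(K_j)$ --- a fact which does hold for Hausdorff-convergent convex bodies, e.g.\ by the cancellation $c+\delta B\subseteq K_j+\epsilon_j B\Rightarrow c+(\delta-\epsilon_j)B\subseteq K_j$. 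Both routes lean on the Bezdek--Bezdek characterization \eqref{eq:bezdekbezdekresult}, but yours uses only the ``cannot be translated into the interior'' formulation and soft compactness, while the paper's exploits the billiard structure of the minimizer. For the lemma as stated either suffices; the paper's version additionally records a dimension-dependent, body-independent Lipschitz constant, which your argument does not recover.
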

\begin{proof}
We show, in fact, that $\alpha$ is a $2(n+1)$-Lipschitz function.
Let $K,T\subseteq\R^n$ be two convex bodies, and let $r$ be their Hausdorff distance.
This means that $K\subseteq T+rB$ and $T\subseteq K+rB$, where $B$ is the unit ball of $\R^n$.
By symmetry, it is enough to show the inequality $\alpha(K)\leq \alpha(T)+2r(n+1)$.
Since $\alpha$ is monotone with respect to inclusion, $\alpha(K)\leq\alpha(T+rB)$, so it is enough to show the inequality 
\begin{equation}\label{eq:LipchitzAlpha}
\alpha(T+rB)\leq \alpha(T)+2r(n+1).
\end{equation}
Consider a shortest billiard trajectory in $T$, and denote its bounce points by $q_1,...,q_m$, so that $\alpha(T)=\sum_{i=1}^m |q_{i+1}-q_i|$.
From \eqref{eq:bezdekbezdekresult} we know that $m\leq n+1$.
Write $v_i=\frac{q_{i+1}-q_i}{|q_{i+1}-q_i|}$ for the unit direction vectors of the billiard orbit.
The fact that $q_1,...,q_m$ is a billiard orbit, means that $v_i,v_{i+1}$ make equal angles with a supporting hyperplane to $T$ at $q_{i+1}$. 
Let $n_{i+1}$ be the outer unit normal to this hyperplane.
We will now define a configuration of points that cannot be translated into the interior of $T+rB$; this will show the inequality \eqref{eq:LipchitzAlpha}.
Consider the configuration $\tilde{q}_i=q_i+rn_i\in T+rB$.
First, estimate the perimeter of this configuration:
\begin{multline}\label{eq:lengthEstimate}
\sum_{i=1}^m|\tilde{q}_{i+1}-\tilde{q}_i|=\sum_{i=1}^m|q_{i+1}+rn_{i+1}-q_i-rn_i|\leq\sum_{i=1}^m \Big(|q_{i+1}-q_i|+r|n_{i+1}-n_i|\Big)\leq\\
 \leq \sum_{i=1}^m \Big(|q_{i+1}-q_i| + 2r\Big)=\alpha(T)+2rm\leq\alpha(T)+2r(n+1).
\end{multline}
Thus, if we show that the configuration $\tilde{q}_1,...,\tilde{q}_m$ cannot be translated into $T+rB$, then \eqref{eq:bezdekbezdekresult} will yield \eqref{eq:LipchitzAlpha}.
First observe that $n_i$ is a normal to a supporting hyperplane of $T$ at $q_i$ as well as to a supporting hyperplane of $rB$ at $rn_i$, so it is a normal to a supporting hyperplane of $T+rB$ at $q_i+rn_i$. 
Using that, we resort to an argument that was employed, for example, in \cite[Theorem 2.1]{AkopyanArseniy2016Eatc}.
The billiard law implies that there exist positive scalars $\lambda_i$ such that $v_{i+1}-v_i=-\lambda_{i+1} n_{i+1}$, for $i=1,...,m$.
Summing over $i=1,...,m$, we get that 
\begin{equation}\label{eq:zeroConvexCombination}
\sum\limits_{i=1}^m\lambda_i n_i = 0.
\end{equation}
Suppose by contradiction that there exists a translation vector $t\in\R^n$ such that all the points $\tilde{q}_i+t$ lie in the interior of $T+rB$.
From the definition of a supporting hyperplane, it follows that the function $x\mapsto \inprod{n_i}{x}$ attains its maximum in $T+rB$ at the point $\tilde{q}_i$.
This maximum must be attained strictly on the boundary of $T+rB$, so
\[\inprod{n_i}{\tilde{q}_i+t}<\inprod{n_i}{\tilde{q}_i}\]
for all $i=1,...,m$, from which it follows that $\inprod{n_i}{t}<0$.
Now, we use the fact that the $\lambda_i$ are positive, and so $\sum\limits_{i=1}^m\lambda_i\inprod{n_i}{t}<0$, which contradicts \eqref{eq:zeroConvexCombination}.
Therefore, $\tilde{q}_1,...,\tilde{q}_m$ cannot be translated into the interior of $T+rB$, which together with the estimate \eqref{eq:lengthEstimate} implies \eqref{eq:LipchitzAlpha}, and completes the proof.

\end{proof}
\begin{remark}
A consequence of Theorem \ref{thm:santaloPtIdentity} and Lemma \ref{lem:alphaCont} is that the function $K\subseteq\R^2\mapsto\beta(K)$ is continuous with respect to the Hausdorff topology, since both $\alpha$ and $\diam$ are.
\end{remark}

\subsection{The polar (dual) body}\label{subsection:dual}
Recall that given a convex body $K\subseteq\R^n$ and $z\in \R^n$, the polar body of $K$ with respect to the center of duality $z$ is defined by
\[K^z=(K-z)^0=\set{y\in\R^n\mid\forall x\in K: \inprod{x-z}{y}\leq 1}.\]
\begin{figure}
\centering
\begin{tikzpicture}[scale = 1]
\draw (2,0)--(2,2)--(0,2)--(0,0)--(2,0);
\tkzDefPoint(0.5,0.5){A};
\tkzDrawPoint(A);
\draw[domain = 0:360,smooth, variable = \t, dashed,blue] plot({0.5+cos(\t)},{0.5+sin(\t)});
\tkzDefPoint(-0.5,-0.5){AA};
\draw[red](-0.5+1,-0.5-1)--(-0.5-1,-0.5+1);
\tkzDrawSegment[dashed](A,AA);
\tkzDefPoint(0.833,0.833){BB};
\draw[red](0.833+0.38,0.833-0.38)--(0.833-0.38,0.833+0.38);
\tkzDrawSegment[dashed](A,BB);
\tkzDefPoint(0.3,1.1){CC};
\draw[red](0.3+3*0.08,1.1+1*0.08)--(0.3-3*0.6,1.1-1*0.6);
\tkzDrawSegment[dashed](A,CC);
\tkzDefPoint(1.1,0.3){DD};
\draw[red](1.1+1*0.08,0.3+3*0.08)--(1.1-1*0.6,0.3-3*0.6);
\tkzDrawSegment[dashed](A,DD);
\node[above] at (2,2) {$K$};
\node [below,blue] at (0.5,0.5) {$z$};
\node [left,red] at (-0.5,-0.5){$K^z+z$};
\end{tikzpicture}
\caption{The dual of the polygon $K$ with respect to the point $z$. 
The blue dashed curve is the unit circle centered at $z$.
 The dashed lines emanating from $z$ point to the vertices of $K$.
  The red polygon is $K^z+z$, and its edges are perpendicular to those dashed lines. \label{fig:dualPolygon}}
\end{figure}
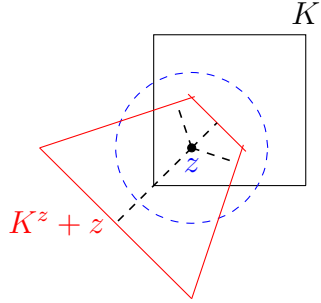

This is a classical construction in convex geometry.
The set $K^z$ is always closed and convex.
It is well known that if $z$ is an interior point of $K$, then $K^z$ is also a convex body.
However, if $z\in\bd K$ or $z\not\in K$, then $K^z$ is unbounded. 
We will now recall how to construct the set $K^z$ where $K$ is a polygon and $z$ is an interior point of $K$.
For that, we first recall what the inversion of a point about a circle is:
suppose that $O$ is the center of a circle of radius $R$, and $A$ is a point in the plane. 
Then the reflection of $A$ about this circle is the point $B$, which is on the ray from $O$ to $A$ and for which $|OA|\cdot |OB| = R^2$.
Now, if $K\subseteq \R^2$ is a convex polygon and $z$ is an interior point of $K$, then $K^z$ is constructed in the following way.
Invert all the vertices of $K$ with respect to the unit circle around $z$.
Then take the lines that pass through these points and are perpendicular to the lines connecting $z$ and the vertices: these will be the edges of $K^z$.
The result will be again a polygon with $n$ vertices, see Figure \ref{fig:dualPolygon}.

For triangles $T$ we also make a distinction between points $z$ in the interior of $T$ for which the dual $T^z$ is obtuse, and those for which it is acute.
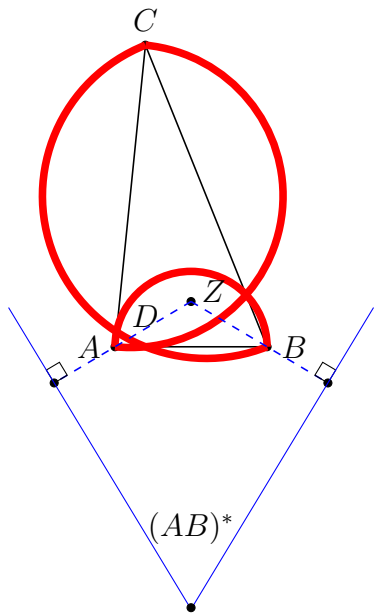
\begin{figure}
\centering
\begin{tikzpicture}[scale = 2]
\tkzDefPoint(0,0){A};
\tkzDefPoint(1,0){B};
\tkzDefPoint(0.2,2){C};
\tkzDrawPoints(A,B,C);
\tkzDrawSegment(A,B);
\tkzDrawSegment(B,C);
\tkzDrawSegment(C,A);
\tkzDrawSemiCircle[diameter,line width = 1mm, red](A,B);
\tkzDrawSemiCircle[diameter, line width = 1mm, red](B,C);
\tkzDrawSemiCircle[diameter, line width = 1mm, red](C,A);
\node at (0.2,0.2) {$D$};

\node[left] at (0,0) {$A$};
\node[right] at (1,0) {$B$};
\node[above] at (0.2,2) {$C$};

\tkzDefPoint(0.5,0.3){Z};
\tkzDrawPoint(Z);
\node[above] at (0.65,0.2) {$Z$};
\tkzDefPoint(0.5-1.8*0.5,0.3-1.8*0.3){AA};
\tkzDefPoint(0.5+1.8*0.5,0.3-1.8*0.3){BB};
\tkzDrawPoints(AA,BB);
\tkzDrawSegment[dashed,blue](Z,AA);
\tkzDrawSegment[dashed,blue](Z,BB);
\tkzDefPoint(0.5-1.8*0.5-0.1*3,0.3-1.8*0.3+0.1*5){gA};
\tkzMarkRightAngle[size = 0.1](Z,AA,gA);
\tkzDefPoint(0.5+1.8*0.5+0.1*3,0.3-1.8*0.3+0.1*5){gB};
\tkzMarkRightAngle[size=0.1](Z,BB,gB);
\draw[blue](0.5-1.8*0.5-0.1*3,0.3-1.8*0.3+0.1*5)--(0.5-1.8*0.5+0.3*3,0.3-1.8*0.3-0.3*5);
\draw[blue](0.5+1.8*0.5+0.1*3,0.3-1.8*0.3+0.1*5)--(0.5+1.8*0.5-0.3*3,0.3-1.8*0.3-0.3*5);
\tkzDefPoint(0.5,-1.73){dAB};
\node[above] at (0.5,-1.4) {$(AB)^*$};
\tkzDrawPoint(dAB);
\end{tikzpicture}
\caption{The domain $D$ is enclosed by semi-circles for which the sides of the triangle $T$ are diameters. For any $Z\in D$,  the triangle $T^Z$ is acute. The angle at the vertex dual to the edge $AB$ is $\pi-\angle AZB$.\label{fig:acuteDomain}}
\end{figure}
\begin{lemma}\label{lem:acutenessZones}
Let $T\subseteq\R^2$ be a triangle. 
For each of the sides of $T$, consider the semi-disk whose diameter is this side, and it is not disjoint from the interior of $T$.
Then for a point $Z$ in the interior of $T$, the triangle $T^Z$ is acute if and only if $Z$ is inside the intersection of those three semi-disks, see Figure \ref{fig:acuteDomain}.
\end{lemma}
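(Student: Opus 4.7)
The plan is to identify the interior angles of $T^z$ via polar duality, and then to translate the acuteness condition (all angles less than $\pi/2$) into a geometric condition on $z$ using Thales' theorem.

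First, I would make explicit the correspondence between edges of $T$ and vertices of $T^z$. After translating so that $z=0$, the polar $T^z$ is cut out by the three half-planes $\{y:\langle A,y\rangle\le 1\}$, $\{y:\langle B,y\rangle\le 1\}$, $\{y:\langle C,y\rangle\le 1\}$. Thus each vertex of $T$ contributes an edge of $T^z$, and the edge corresponding to vertex $A$ has outer unit normal $A/|A|$. Two such edges meet at a vertex of $T^z$; the vertex dual to the edge $AB$ of $T$ is the intersection of the edges with outer normals in the directions of $A$ and $B$ (as seen from $z$).

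Next, I would compute the interior angle at this dual vertex. For a convex polygon, the interior angle at a vertex where two adjacent edges have outer normals making angle $\phi$ (measured through the exterior) satisfies (interior angle) $=\pi-\phi$. Since the outer normals at the dual vertex point in the directions $\overrightarrow{zA}$ and $\overrightarrow{zB}$, the angle between them is exactly $\angle AzB$, so the interior angle of $T^z$ at the vertex dual to edge $AB$ equals $\pi-\angle AzB$, matching the caption of Figure \ref{fig:acuteDomain}. The same argument applies to the other two dual vertices, giving interior angles $\pi-\angle BzC$ and $\pi-\angle CzA$.

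Now $T^z$ is acute iff each of its three interior angles is less than $\pi/2$, which is equivalent to $\angle AzB, \angle BzC, \angle CzA > \pi/2$ simultaneously. Here I would invoke Thales' theorem: the locus of points $z$ with $\angle AzB=\pi/2$ is the circle of diameter $AB$, and $\angle AzB>\pi/2$ exactly when $z$ lies in the open disk bounded by that circle. Hence the condition on the three angles is equivalent to $z$ belonging to the intersection of the three open disks with diameters $AB$, $BC$, $CA$. Intersecting this with the interior of $T$ and noting that each relevant disk meets $\mathrm{int}(T)$ along the semi-disk on the side of the chord containing the interior of $T$, this intersection is precisely the region enclosed by the three semicircular arcs shown in Figure \ref{fig:acuteDomain}, which completes the proof.

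I do not expect any real obstacle here: the only slightly delicate point is confirming the sign convention in step two (interior angle $=\pi-$ angle between outer normals) and checking that the three disks intersected with $\mathrm{int}(T)$ give exactly the depicted lens-shaped region, both of which are routine.
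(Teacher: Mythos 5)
Your proposal is correct and follows essentially the same route as the paper: both establish that the interior angle of $T^z$ at the vertex dual to $AB$ equals $\pi-\angle AzB$ (the paper via the angle sum in the quadrilateral with two right angles formed by $z$, the two inverted points, and $(AB)^*$; you via the equivalent fact that the interior angle is $\pi$ minus the angle between the outer normals), and then both conclude with Thales' theorem characterizing $\angle AzB>\pi/2$ by membership in the open disk with diameter $AB$.
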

\begin{proof}
It is well known that if the endpoints of a diameter of a disk are connected to a given point, then the resulting angle will be a right angle if the point is on the boundary circle, and will be an obtuse angle if the point is in the interior of the disk.
Thus, if $Z\in D$ then the angle $\angle AZB$ is obtuse.
By the construction of the dual polygon described in the paragraph above, the vertex dual to the edge $AB$, $(AB)^*$, is obtained by intersecting two lines that are perpendicular to $AZ$ and $ZB$.
This way we get a quadrilateral with two right angles, and by considering the sum of angles, we conclude that $\angle AZB$ is obtuse if and only if the angle at the vertex $(AB)^*$ is acute (see Figure \ref{fig:acuteDomain}).
The same reasoning applies to the other two angles of $T^Z$.
\end{proof}
\section{The ``Santal\'{o} point" of a convex body}\label{section:santalopt}
Our goal in this section is to prove Theorem \ref{thm:santaloPtIdentity}.
When discussing the volume product of a convex body $K$, if $z\not\in K$, then $K^z$ is unbounded, so its volume is infinite.
However, as explained in Subsection \ref{subsection:shortestBT}, unbounded domains can have closed billiard trajectories.
For this reason, a priori a point that minimizes $\alpha(K^z)$ can be any point in $\R^2$.
Thus, our first step will be to show that if $z\not\in K$, then $\alpha(K^z)=\infty$. 
\begin{lemma}\label{lem:dontLookOutside}
Let $K$ be a two-dimensional convex body, and $z\not\in K$. Then $\alpha(K^z)=\infty$. 
\end{lemma}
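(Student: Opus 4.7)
The plan is to invoke Lemma \ref{lem:unboundedImpossible} by showing that $\alpha((K^z)_R)$ grows without bound as $R \to \infty$, where $(K^z)_R := K^z \cap RB$. The whole argument rests on a single geometric claim: the recession cone of $K^z$ contains an open two-dimensional cone of directions, and therefore $K^z$ contains Euclidean disks of arbitrary radius. Combined with the monotonicity of $\alpha$, this will force $\alpha((K^z)_R) \to \infty$.

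First I would separate $z$ from $K$ quantitatively. Since $K$ is compact and $z \notin K$, the distance $d := \mathrm{dist}(z,K)$ is positive; if $x_0$ is the closest point of $K$ to $z$ and $u := (x_0 - z)/d$, then the supporting hyperplane to $K$ at $x_0$ orthogonal to $u$ yields $\langle x - z, u \rangle \geq d$ for every $x \in K$. Using $\|x - z\| \leq D := \max_{x \in K}\|x - z\|$, a direct perturbation estimate shows that every unit vector $\tilde u$ in a small neighborhood $U$ of $-u$ still satisfies $\langle x - z, \tilde u \rangle \leq -d/2$ for all $x \in K$. I would then verify that every such $\tilde u$ lies in the recession cone of $K^z$: for any $y_0 \in K^z$ and $s \geq 0$,
\[
\sup_{x \in K}\langle x - z, y_0 + s\tilde u \rangle \leq \sup_{x \in K}\langle x - z, y_0 \rangle + s\sup_{x \in K}\langle x - z, \tilde u\rangle \leq 1 + s\cdot(-d/2) \leq 1,
\]
so $y_0 + s\tilde u \in K^z$. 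Hence $K^z$ contains translates of the open cone $\{s\tilde u : s \geq 0,\ \tilde u \in U\}$; going far enough into this cone, it follows that for every $\rho > 0$ there is a point $p$ with $B(p,\rho) \subseteq K^z$.

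To finish, for each such disk $B(p,\rho) \subseteq K^z$ and any $R \geq \|p\| + \rho$ one has $B(p,\rho) \subseteq (K^z)_R$. The monotonicity of $\alpha$ with respect to inclusion then gives $\alpha((K^z)_R) \geq \alpha(B(p,\rho)) = 4\rho$, where the last equality holds because in a planar disk of radius $\rho$ the shortest closed orbit is the $2$-orbit along a diameter (length $4\rho$), which is shorter than the Fagnano $3$-orbit in the inscribed equilateral triangle (length $3\sqrt{3}\,\rho$). Letting $\rho \to \infty$ yields $\alpha((K^z)_R) \to \infty$, and Lemma \ref{lem:unboundedImpossible} then gives $\alpha(K^z) = \infty$. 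The only part of the argument that requires a little care is the perturbation estimate producing an open cone of directions in the recession cone; the rest is standard separating-hyperplane reasoning and monotonicity.
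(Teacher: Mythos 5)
Your proof is correct and follows essentially the same route as the paper: both arguments show that $K^z$ contains a full two-dimensional cone and then combine monotonicity of $\alpha$ with Lemma \ref{lem:unboundedImpossible}. The only difference is cosmetic --- the paper produces the cone as the dual $S_z^z$ of a sector $S_z\supseteq K$ with vertex $z$ and certifies unboundedness via the homothety $\alpha(S_z^z\cap B_R)=cR$, whereas you compute the recession cone of $K^z$ directly from the support inequality and use arbitrarily large inscribed disks; both certifications are equally valid.
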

\begin{proof}
Let $z$ be any point not in $K$.
Since $K$ is a convex body, it follows that there are two supporting lines to $K$ that pass through $z$. 
This means that there exists a sector (the intersection of two half-planes determined by two non-parallel lines) $S_z$ with a vertex at $z$ for which $K\subseteq S_z$.
Passing to the dual gives the opposite inclusion, so we have $S_z^z \subseteq K^z$. 
It is immediate to check that the dual of a sector with respect to the vertex of the sector is again a sector.
For each $R>0$, let $B_R$ denote the disk of radius $R$ around the origin.
Then $K^z\cap B_R\supseteq S_z^z\cap B_R$. 
The right-hand side is a circular sector, of radius $R$, and the angle of the sector does not depend on $R$, see Figure \ref{fig:noBilliardInSector}.
All of these sectors are homothetic, which means that $\alpha(S_z^z \cap B_R) = cR$ for some constant $c>0$.
Since $\alpha$ is monotone with respect to inclusion, this means that $\alpha(K^z \cap B_R) \geq cR$, so $\set{\alpha(K^z\cap B_R)\mid R>0}$ is unbounded.
Now Lemma \ref{lem:unboundedImpossible} implies that $\alpha(K^z)=\infty$.
\end{proof}
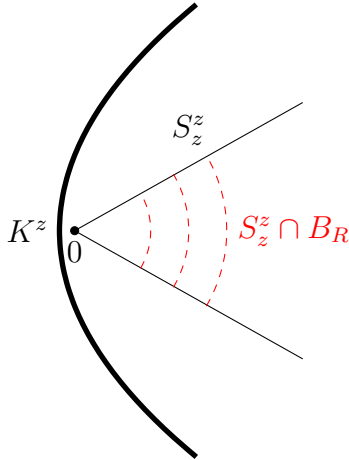
\begin{figure}
\centering
\begin{tikzpicture}[scale = 1]
\draw[smooth,black, line width = 2pt, domain = -3:3, variable  = \t] plot ({0.2*\t*\t-0.2},{\t});
\draw(0,0)--(3,1.7);
\draw(0,0)--(3,-1.7);
\tkzDefPoint(0,0){O};
\tkzDrawPoint(O);

\draw[dashed,red,domain=-30:30,variable = \t] plot({cos(\t)},{sin(\t)});
\draw[dashed,red,domain=-30:30,variable = \t] plot({1.5*cos(\t)},{1.5*sin(\t)});
\draw[dashed,red,domain=-30:30,variable = \t] plot({2*cos(\t)},{2*sin(\t)});

\node[left] at (-0.2,0) {$K^z$};
\node[below] at (0,0) {$0$};
\node[above] at (1.5,1) {$S_z^z$};
\node[red,right] at (2,0) {$S_z^z\cap B_R$};
\end{tikzpicture}
\caption{A domain that contains a sector cannot contain closed billiard orbits.\label{fig:noBilliardInSector}}
\end{figure}

In some cases it is simple to correspond the length of a shortest billiard trajectory of $K^z$ to a geometric quantity of $K$:
\begin{lemma}\label{lem:segmentsAndStrips}
Let $K$ be a convex body (of any dimension), and $z\in K$.
If $K$ is contained in a slab (the intersection of two half-spaces determined by two parallel hyperplanes) of width $w$, then $K^z$ contains a segment of length at least $4\slash w$.
Conversely, if $K$ contains a segment of length $L$ the midpoint of which is $z$, then $K^z$ is contained in a slab  of width $4\slash L$.
\end{lemma}
\begin{proof}
Without loss of generality we may assume that $z=0$ and the slab is vertical, of the form $S=\set{(x_1,...,x_n)\mid a-w\leq x_1 \leq a}$ for some $0<a<w$. 
Then the inclusion $K\subseteq S$ implies the opposite inclusion for the duals: $K^0\supseteq S^0$, see Figure \ref{fig:stripsAndSegments}.
 It is immediate to check that $S^0$ is the segment with end points $(\frac{1}{a-w},0,...,0)$ and $(\frac{1}{a},0,...,0)$.
This segment has length $\frac{1}{a}-\frac{1}{a-w}=\frac{w}{a(a-w)}$, which is at least $\frac{4}{w}$.

The second statement is proved similarly. 
Without loss of generality we may assume that $z=0$ and the segment is positioned along the $x_1$-axis. 
Then the endpoints of this segment are $(-\frac{L}{2},0,\dots,0)$, and $(\frac{L}{2},0,\dots,0)$. 
Denote this segment by $I$.
The inclusion $I\subseteq K$ implies the opposite inclusion for the duals: $K^0\subseteq I^0$. 
However, it is immediate to check that $I^0=\set{(x_1,...,x_n)\mid -\frac{2}{L}\leq x_1 \leq \frac{2}{L}}$, so this is a slab of width $\frac{4}{L}$. 

\end{proof}
\begin{figure}
\centering
\begin{subfigure}[b]{0.4\textwidth}
\centering
\begin{tikzpicture}[scale = 2]
\draw[smooth cycle,tension = 2,line width = 2pt] plot coordinates {(1,0)(0.8,0.3)(0.2,1)(0,0)(0.2,-1)(0.8,-0.3)(1,0)};
\draw[blue] (0,0)--(1,0);
\tkzDefPoint(0.7,0){A};
\tkzDrawPoint[blue](A);

\node[above] at (0.7,0) {$0$};
\node[below] at (0.8,0) {\scriptsize{$\frac{1}{a}$}};
\node[below] at (0.4,0) {\scriptsize{$\frac{1}{w-a}$}};
\node[above] at (0.2,1) {$K^z$};

\end{tikzpicture}
\caption{A segment of length at least $4\slash w$ is contained in $K^z$.}
\end{subfigure}
\hspace{1em}
\begin{subfigure}[b]{0.4\textwidth}
\centering
\begin{tikzpicture}[scale = 2]
\draw[smooth cycle, tension = 2, line width = 2pt] plot coordinates {(0,0)(0.2, 0.3)(0.8,1)(1,0)(0.8,-1)(0.2,-0.3)(0,0)};
\tkzDefPoint(0.3,0){A};
\tkzDrawPoint[blue](A);
\draw[dashed,blue](0,1)--(0,-1);
\draw[dashed,blue](1,1)--(1,-1);
\draw[dashed,blue](0.3,1)--(0.3,-1);
\draw[<->](0,0.4)--(0.3,0.4);
\draw[<->](0.3,0.4)--(1,0.4);

\node[above] at (0.05,0.4) {$w-a$};
\node[above] at (0.65,0.4) {$a$};

\node[below] at (0.4,0){$z$};
\draw[blue](0,0)--(1,0);

\node[above] at (0.8,1) {$K$};
\end{tikzpicture}
\caption{A strip of width $w$ contains $K$.}
\end{subfigure}
\caption{Correspondence of segments and slabs. \label{fig:stripsAndSegments}}
\end{figure}
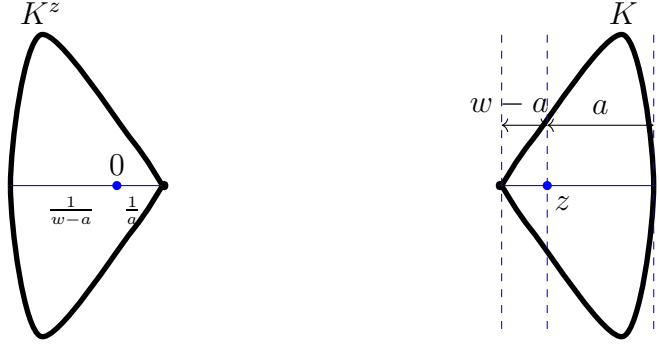
If we apply the second statement of Lemma \ref{lem:segmentsAndStrips} to the diameter of $K$, we see that if $z_0$ is the midpoint of a diameter of $K$, then $K^{z_0}$ is contained in a slab of width $4\slash\diam(K)$, and hence 
\begin{equation}\label{eq:midPointDiam}
\alpha(K^{z_0})\leq\frac{8}{\diam(K)}.
\end{equation}
This proves that $8\slash\diam(K)$ is an upper bound for $\inf\limits_{z\in K}\alpha(K^z)$, in any dimension.
In the two-dimensional case, we will now show that it is a lower bound, namely, that:
\begin{equation}\label{eq:alphaInequality}
\forall z\in K: \hspace{2mm} \alpha(K^z)\geq\frac{8}{\diam(K)}.
\end{equation}
Let $z\in K$.
By the results of \cite{BezdekDaniel2009Sbt}, there exists a shortest closed billiard trajectory of $K^z$ that is either a 2-orbit or a 3-orbit.
If it is a 2-orbit, then this orbit is a double normal, so $K^z$ is contained in a strip of width $\frac{\alpha(K^z)}{2}$.
Thus, by Lemma \ref{lem:segmentsAndStrips}, $(K^z)^0=K-z$ contains a segment of length at least $\frac{4}{\frac{\alpha(K^z)}{2}}=\frac{8}{\alpha(K^z)}$.
If $K-z$ contains such a segment, then so does $K$, and since the diameter is the length of the longest segment contained in $K$, we have $\frac{8}{\alpha(K^z)}\leq\diam(K)$, which gives \eqref{eq:alphaInequality} in this case (this proof  is again valid in all dimensions).
Now we wish to establish \eqref{eq:alphaInequality} for all $z$ such that $\alpha(K^z)$ is realized by a 3-orbit.
The proof relies on the following two reductions: from arbitrary $K$ to triangles, and from triangles to isosceles triangles.
In the case of isosceles triangles, we give a direct proof.
We begin with the reduction to triangles.
\begin{lemma}\label{lem:reductionToTriangles}
Suppose that for all triangles $T$, and for all $z\in T$ for which $T^z$ is an acute triangle we have $\alpha(T^z)\geq\frac{8}{\diam(T)}$.
Let $K$ be a convex body in $\R^2$, and let $z\in K$ be a point for which $\alpha(K^z)$ is realized by a 3-orbit.
Then $\alpha(K^z)\geq\frac{8}{\diam(K)}$.
\end{lemma}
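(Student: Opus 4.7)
The plan is to reduce to the triangle case by building, from the given 3-orbit, an acute triangle circumscribing $K^z$ whose polar dual (shifted by $z$) lies inside $K$, and then invoking the assumed triangle inequality. Let $p_1,p_2,p_3$ be the bounce points of a closed 3-orbit in $K^z$ that realizes $\alpha(K^z)$, and at each $p_i$ fix a supporting line $\ell_i$ to $K^z$ with respect to which the billiard reflection law holds. These three lines bound a triangle $T'$ with $K^z\subseteq T'$, and the polygonal curve $p_1p_2p_3$ is a closed 3-orbit of $T'$. By Lemma~\ref{lem:alphaTriangle}, a triangle admits a closed 3-orbit only when it is acute, in which case that 3-orbit is the unique Fagnano orbit; hence $T'$ is acute and $\alpha(T')=\alpha(K^z)$.

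Next I would transfer this triangle back to a triangle contained in $K$ via polar duality. Writing $A^0:=\{y:\langle x,y\rangle\le 1\ \forall x\in A\}$ for the dual with respect to the origin (so that, in the paper's notation, $K^z=(K-z)^0$), the inclusion $K^z\subseteq T'$ together with antitonicity of polarity gives $(T')^0\subseteq (K^z)^0$. Because $K-z$ is closed, convex, and contains the origin, the bipolar theorem yields $(K^z)^0=((K-z)^0)^0=K-z$, so $(T')^0\subseteq K-z$, i.e.,
\[
T:=(T')^0+z\subseteq K.
\]
Since $T'$ is a bounded triangle containing $0$ in its interior (because $K^z$ does), $(T')^0$ is again a triangle containing $0$, so $T$ is a genuine triangle containing $z$; and a direct computation gives
\[
T^z=(T-z)^0=\bigl((T')^0\bigr)^0=T',
\]
which is acute.

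Applying the assumed bound to this $T$ and this $z$ then yields $\alpha(T^z)\geq 8/\diam(T)$. Combining with $\alpha(T^z)=\alpha(T')=\alpha(K^z)$ and the inclusion $T\subseteq K$ (which implies $\diam(T)\leq\diam(K)$) gives $\alpha(K^z)\geq 8/\diam(K)$, as required. The step most in need of care is the very first one: one must verify that the three supporting lines really do form a non-degenerate bounded triangle. Parallel pairs of $\ell_i$ or three concurrent $\ell_i$ can be excluded using that $p_1p_2p_3$ has positive perimeter and satisfies the reflection law at each vertex, since either degeneracy would force the angle-bisector configuration at the $p_i$ to collapse as well.
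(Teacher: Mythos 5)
Your proposal is correct and follows essentially the same route as the paper: circumscribe an acute triangle $T_{\mathrm{out}}$ around $K^z$ using supporting lines at the bounce points, note that the 3-orbit is the Fagnano orbit of that triangle so $\alpha(K^z)=\alpha(T_{\mathrm{out}})$, pass to the polar to get a triangle inside $K$ with smaller diameter, and apply the triangle hypothesis. Your closing remark about ruling out degenerate (unbounded or concurrent) configurations of the three supporting lines is a point the paper passes over silently, and the reflection-law argument you sketch (the outward normals satisfy a positive vanishing combination) is the right way to handle it.
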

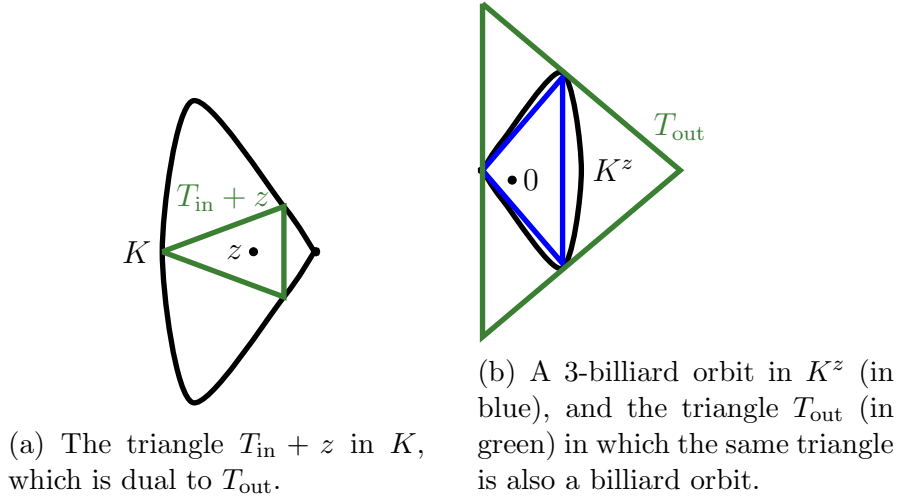
\begin{figure}
\centering
\begin{subfigure}[b]{0.4\textwidth}
\centering
\begin{tikzpicture}[scale = 2]
\draw[smooth cycle,tension = 2,line width = 2pt] plot coordinates {(1,0)(0.8,0.3)(0.2,1)(0,0)(0.2,-1)(0.8,-0.3)(1,0)};
\draw[OliveGreen,line width = 2pt](0,0)--(0.8,0.3)--(0.8,-0.3)--(0,0);
\tkzDefPoint(0.6,0){O};
\tkzDrawPoint(O);
\node[left] at (0.6,0) {$z$};
\node[left] at (0,0){$K$};
\node[OliveGreen,above] at (0.4,0.2) {$T_{\mathrm{in}}+z$};
\end{tikzpicture}
\caption{The triangle $T_{\mathrm{in}}+z$ in $K$, which is dual to $T_{\mathrm{out}}$.\label{fig:reductionToTrigOrig}}
\end{subfigure}
\hspace{1em}
\begin{subfigure}[b]{0.4\textwidth}
\begin{tikzpicture}[scale = 1.3]
\draw[smooth cycle, tension = 2, line width = 2pt] plot coordinates {(0,0)(0.2, 0.3)(0.8,1)(1,0)(0.8,-1)(0.2,-0.3)(0,0)};
\draw[blue,line width = 2pt] (0,0)--(0.81,0.95)--(0.81,-0.95)--(0,0);
\draw[OliveGreen,line width = 2pt](0,1.7)--(2,0)--(0,-1.7)--(0,1.7);
\tkzDefPoint(0.3,-0.1){O};
\tkzDrawPoint(O);
\node[right] at (0.3,-0.1){$0$};
\node[right] at (1,0) {$K^z$};
\node[OliveGreen,above] at (2,0.2) {$T_{\mathrm{out}}$};
\end{tikzpicture}
\caption{A 3-billiard orbit in $K^z$ (in blue), and the triangle $T_{\mathrm{out}}$ (in green) in which the same triangle is also a billiard orbit.\label{fig:reductionToTrigDual}}
\end{subfigure}
\caption{Reduction from arbitrary bodies to triangles.\label{fig:reductionToTriangles}}
\end{figure}
\begin{proof}
Assume that $z\in K$ is a point such that $\alpha(K^z)$ is realized by a 3-orbit.
Consider three support lines to $K^z$ at the bounce points of this orbit. 
These lines form a triangle $T_{\mathrm{out}}$ that contains $K^z$.
Observe that the billiard trajectory for $K^z$ is also a billiard trajectory for $T_{\mathrm{out}}$.
By Lemma \ref{lem:alphaTriangle}, if there is a billiard 3-orbit in a triangle, then this triangle is acute, and the 3-orbit is the shortest billiard orbit in it.
It then follows that $\alpha(K^z)=\alpha(T_{\mathrm{out}})$, see Figure \ref{fig:reductionToTrigDual}.
As a result, we get an opposite inclusion for the duals:
\[T_{\mathrm{out}}^0\subseteq (K^z)^0=K-z .\]
Write $T_{\mathrm{in}}=T_{\mathrm{out}}^0$, see Figure \ref{fig:reductionToTrigOrig}.
We then have $T_{\mathrm{in}}+z\subseteq K$, and it holds that $(T_{\mathrm{in}}+z)^z=T_{\mathrm{in}}^0=T_{\mathrm{out}}$.
Also, since $T_{\mathrm{in}}+z\subseteq K$, we must have \[\diam(T_{\mathrm{in}})\leq \diam(K).\]
Consequently,
\[\alpha(K^z)=\alpha(T_{\mathrm{out}})=\alpha((T_{\mathrm{in}}+z)^z).\]
By our assumption about triangles,\[\alpha((T_{\mathrm{in}}+z)^z)\geq\frac{8}{\diam(T_{\mathrm{in}})}\geq\frac{8}{\diam(K)}.\]
Combining the last two displayed lines we obtain the desired result, \[\alpha(K^z)\geq\frac{8}{\diam(K)}.\]
\end{proof}
Next we reduce the problem from arbitrary triangles to isosceles ones.
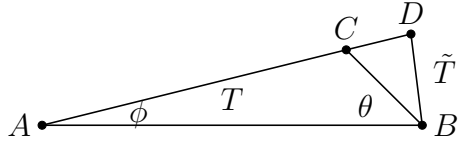
\begin{figure}
\centering
\begin{tikzpicture}[scale=5]
\tkzDefPoint(0,0){A};
\tkzDefPoint(1,0){B};
\tkzDefPoint(0.8,0.2){C};
\tkzDrawPoints(A,B,C);
\tkzDrawSegment(A,B);
\tkzDrawSegment(B,C);
\tkzDrawSegment(C,A);
\tkzDefPoint(0.9702,0.2419){D};
\tkzDrawPoint(D);
\tkzDrawSegment(C,D);
\tkzDrawSegment(D,B);
\node[left] at (0,0) {$A$};
\node[right] at (1,0){$B$};
\node[above] at (0.8,0.2){$C$};
\node[right] at (0.2,0.03) {$\phi$};
\node[left] at (0.9,0.05) {$\theta$};
\node at (0.5,0.07) {$T$};
\node[above] at (0.97,0.24) {$D$};
\node[right] at (1,0.15) {$\tilde{T}$};
\end{tikzpicture}
\caption{Reduction to isosceles triangle. The triangle $T$ is contained in the isosceles triangle $\tilde{T}$ which has the same diameter. The apex angle of $\tilde{T}$ is the smallest angle of $T$.\label{fig:reductionToIsosceles}}
\end{figure}
\begin{lemma}\label{lem:reductionToIsosceles}
Suppose that for all isosceles triangles $T$ with apex angle less than or equal to $\frac{\pi}{3}$, and for all $z\in T$ we have $\alpha(T^z)\geq\frac{8}{\diam(T)}$. 
Then this inequality holds for all triangles $T$ and for all $z\in T$.
\end{lemma}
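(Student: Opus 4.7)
The plan is to follow the construction suggested by the preceding figure: inscribe $T$ in an isosceles triangle $\tilde T$ that has the same diameter and an apex angle of at most $\pi/3$, and then exploit monotonicity of $\alpha$ together with the inclusion reversal of polar duality.

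Explicitly, let $T=ABC$ where $A$ is a vertex at which the smallest angle $\phi$ of $T$ is attained. Since the three angles of $T$ sum to $\pi$, automatically $\phi\le\pi/3$. The diameter of $T$ equals its longest side, which is opposite the largest angle; that largest angle sits at $B$ or $C$, so without loss of generality $|AB|=\diam(T)\ge|AC|$. Place $D$ on the ray from $A$ through $C$ at distance $|AB|$ from $A$, and let $\tilde T$ be the triangle $ABD$. By construction $\tilde T$ is isosceles with apex $A$ and apex angle equal to $\phi\le\pi/3$. Because $|AC|\le|AB|=|AD|$, the vertex $C$ lies on the segment $\overline{AD}$, hence $T\subseteq\tilde T$. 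The diameter of $\tilde T$ is realized by the two equal sides: by the law of cosines $|BD|^2=2|AB|^2(1-\cos\phi)\le|AB|^2$ since $\cos\phi\ge 1/2$, so $\diam(\tilde T)=|AB|=\diam(T)$.

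Now take any $z\in T\subseteq\tilde T$. The inclusion $T\subseteq\tilde T$ yields the reverse inclusion $\tilde T^z\subseteq T^z$ for the polar duals, and the monotonicity of $\alpha$ recalled in Subsection \ref{subsection:shortestBT} gives $\alpha(\tilde T^z)\le\alpha(T^z)$. Applying the hypothesis of the lemma to the isosceles triangle $\tilde T$ with the point $z$ produces $\alpha(\tilde T^z)\ge 8/\diam(\tilde T)=8/\diam(T)$, which combined with the previous inequality is exactly the desired conclusion $\alpha(T^z)\ge 8/\diam(T)$.

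I do not expect a serious obstacle; the only points that require a moment of care are verifying that $C$ lands on the segment $\overline{AD}$ (so that $T\subseteq\tilde T$) and that the base $BD$ of $\tilde T$ is not longer than the two equal sides (so that the diameter is preserved). Both facts rest on the elementary observation that the vertex hosting the smallest angle must be adjacent to the longest side, together with the inequality $\cos\phi\ge 1/2$ for $\phi\le\pi/3$.
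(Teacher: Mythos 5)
Your proof is correct and takes essentially the same route as the paper: extend the shorter side adjacent to the smallest angle $\phi\le\pi/3$ to obtain an isosceles triangle $\tilde T\supseteq T$ with the same diameter, then use the inclusion reversal of polar duality together with the monotonicity of $\alpha$ and the hypothesis applied to $\tilde T$. Your law-of-cosines verification that the base of $\tilde T$ does not exceed the legs (so the diameter is preserved) is a detail the paper leaves implicit.
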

\begin{proof}
Let $T$ be any triangle, and let $z\in T$. 
The diameter of $T$ is the longest edge of $T$.
Denote its length by $a$, and let $\theta\geq\phi$ be the angles of $T$ that are adjacent to this edge, see Figure \ref{fig:reductionToIsosceles}.
Since $\phi$ is the smallest angle, it follows that $\phi\leq\frac{\pi}{3}$.
The edge $AC$ near $\phi$ is not longer than the edge $AB$, and we extend it until it will have length $a$.
Call the newly obtained isosceles triangle $\tilde{T}$.
Then $\tilde{T}$ is an isosceles triangle with apex angle at most $\frac{\pi}{3}$, and $\diam(\tilde{T})=\diam(T)=a$. 
Since $T\subseteq \tilde{T}$, we have $\tilde{T}^z\subseteq T^z$, and hence $\alpha(T^z)\geq\alpha(\tilde{T}^z)$. 
Since $\tilde{T}$ is an isosceles triangle, we can use our assumption, and conclude that
\[\alpha(T^z)\geq\alpha(\tilde{T}^z)\geq\frac{8}{\diam(\tilde{T})}=\frac{8}{\diam(T)}.\]
\end{proof}
Finally, we present the proof for isosceles triangles.
Since both sides of the inequality \eqref{eq:alphaInequality} are $-1$-homogeneous (in $K$), we can assume for simplicity that $\diam(K)=1$.
Also, recall that the inequality $\alpha(K^z)\geq \frac{8}{\diam (K)}$ was already shown for all $z$ for which $\alpha(K^z)$ is realized by a 2-orbit, so we can only check the points $z$ for which $\alpha(K^z)$ is realized by a 3-orbit.
\begin{lemma}\label{lem:proofForIsosceles}
Let $K$ be an isosceles triangle for which the apex angle is at most $\frac{\pi}{3}$, and for which the diameter is equal to $1$ (in this case, the diameter is realized by the legs of the triangle). 
Then for any $Z\in K$ for which $\alpha(K^Z)$ is realized by a 3-orbit, we have $\alpha(K^Z)\geq 8$.
\end{lemma}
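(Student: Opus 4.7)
The plan is to derive an explicit formula for $\alpha(K^z)$ in the 3-orbit case and then bound it using the acuteness constraint together with the isosceles geometry.

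The first step is to apply Lemma \ref{lem:alphaTriangle} to $K^z$, picking the vertex $V$ of $K^z$ dual to the side $BC$ of $K$. By the standard correspondence in polar duality (cf.\ Lemma \ref{lem:acutenessZones}), the angle of $K^z$ at $V$ equals $\pi-\angle BzC$, so $\sin\theta_V=\sin\angle BzC$. Writing $V$ as the intersection of the polar lines of $B$ and $C$ via Cramer's rule and using $(B-z)\times(C-z)=2\,\mathrm{Area}(zBC)$, a short calculation of the distance from $V$ to the polar line of $A$ yields
\[
\alpha(K^z)\;=\;\frac{4\,\mathrm{Area}(K)}{|zA|\cdot|zB|\cdot|zC|}.
\]
With $\mathrm{Area}(K)=\tfrac12\sin\gamma$ (where $\gamma$ is the apex angle), the target inequality $\alpha(K^z)\geq 8$ becomes
\[
|zA|\cdot|zB|\cdot|zC|\;\leq\;\tfrac14\sin\gamma.
\]

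To bound the product, identify $\R^2\cong\C$ and set $p(w)=(w-A)(w-B)(w-C)$, so that $|zA|\cdot|zB|\cdot|zC|=|p(z)|$. Since $p$ is holomorphic, $|p|$ is subharmonic; its maximum over the compact closure of the acuteness region $D$ is therefore attained on $\partial D$. By Lemma \ref{lem:acutenessZones}, $\partial D$ is a union of three circular arcs, one on each circle whose diameter is a side of $K$; on each arc exactly one of $\angle AzB$, $\angle BzC$, $\angle CzA$ equals $\pi/2$. The $B\leftrightarrow C$ symmetry of $K$ reduces the three arcs to two cases.

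On the arc on the circle with diameter $BC$, parametrize $z=(b/2)(\cos\phi,\sin\phi)$ with the origin at the midpoint of $BC$; Thales's theorem gives $|zB|\cdot|zC|=(b^2/2)\sin\phi$ and direct computation (using $b^2/4+h^2=1$) gives $|zA|^2=1-bh\sin\phi$. Substituting $t=bh\sin\phi\in[\sin^2\gamma,\sin\gamma]$, the target inequality reduces to $t\sqrt{1-t}\leq(1+\cos\gamma)/4$. This splits into three sub-cases according to whether $t=2/3$ (the global maximizer of $t\sqrt{1-t}$) lies inside the sub-interval; the tight sub-case reduces to $\cos(3\gamma)\geq -1$ with equality iff $\gamma=\pi/3$. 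On the arc on the circle with diameter $AB$, parametrize by $\phi=\angle zAB\in[\gamma/2,\gamma]$, so that $|zA|=\cos\phi$ and $|zB|=\sin\phi$; the law of cosines in triangle $zAC$ then yields $|zC|^2=1+\cos^2\phi-2\cos\phi\cos(\gamma-\phi)$. At the two endpoints of this arc, the target inequality collapses respectively to $b^2\leq 1$ and $(2\cos\gamma-1)^2\geq 0$, each with equality only in the equilateral case, and the interior of the arc is controlled by a derivative/monotonicity argument.

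The main obstacle will be the analysis on the $AB$-arc, where the extra factor $\cos(\gamma-\phi)$ makes the trigonometric expression more delicate than on the $BC$-arc. The fact that both endpoint equalities collapse to $(2\cos\gamma-1)^2\geq 0$ --- precisely matching the equality case $\gamma=\pi/3$ of the whole lemma --- signals that this square is the underlying obstruction throughout, which guides the choice of algebraic cancellations in the interior analysis.
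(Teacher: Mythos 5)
Your overall architecture coincides with the paper's: the same identity $\alpha(K^z)=4\,\mathrm{area}(K)/(|zA|\,|zB|\,|zC|)$ (the paper writes it as $2\sin(2\phi)/|(z-a)(z-b)(z-c)|$ with $\phi$ the base angle, which is the same thing since $\mathrm{area}(K)=\tfrac12\sin\gamma=\tfrac12\sin 2\phi$), followed by the same maximum-principle reduction of $|p(z)|=|(z-A)(z-B)(z-C)|$ to the boundary of the acuteness region $D$. Your analysis of the $BC$-arc is correct: I checked that $t=bh\sin\phi$ does range over $[\sin^2\gamma,\sin\gamma]$, that the target becomes $t\sqrt{1-t}\le(1+\cos\gamma)/4$, and that the tight sub-case does reduce to $\cos 3\gamma\ge-1$ with equality exactly at $\gamma=\pi/3$.

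The gap is on the $AB$-arc. Your endpoint computations are right ($\phi=\gamma/2$ gives $\sin(\gamma/2)\le\tfrac12$, i.e.\ $b\le 1$, and $\phi=\gamma$ gives $(2\cos\gamma-1)^2\ge 0$), but the interior cannot be ``controlled by a derivative/monotonicity argument'' as stated, because the function $g(\phi)=\cos\phi\sin\phi\,\sqrt{1+\cos^2\phi-2\cos\phi\cos(\gamma-\phi)}$ is not monotone on $[\gamma/2,\gamma]$: in the extremal case $\gamma=\pi/3$ it attains the threshold value $\sin\gamma/4=\sqrt{3}/8$ at \emph{both} endpoints and is strictly smaller in between (e.g.\ $g(\pi/4)^2=0.0335\ldots<3/64$). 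So equality at both ends forces you to rule out an interior local maximum above the threshold, which requires a genuinely finer argument (e.g.\ unimodality of $g$) that you have not supplied. The paper avoids this entirely: on each boundary arc one of the angles $\angle AzB$, $\angle BzC$, $\angle CzA$ equals $\pi/2$, and choosing the vertex of $K$ accordingly the formula reads $\alpha(K^z)=2\bigl(\tfrac{1}{|Vz|}+\tfrac{1}{|zD|}\bigr)$ with $V$, $D$ the endpoints of the cevian through $z$; the harmonic--arithmetic mean inequality gives $\tfrac{1}{|Vz|}+\tfrac{1}{|zD|}\ge\tfrac{4}{|VD|}\ge 4$ since $|VD|\le\diam(K)=1$, finishing all three arcs at once. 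Adopting that step would close your gap and also shorten the $BC$-arc computation.
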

\begin{proof}
Let $K$ be such a triangle, and let $Z\in K$ be a point for which $K^Z$ is acute. 
Denote the vertices of the triangle $K$ by $A$, $B$, $C$, and denote its sides by $a$, $b$, $c$, where $a$ is the side opposite to $A$ and similarly for the others.
Call the vertices of $K^z$ $A^*$, $B^*$, $C^*$, where $A^*$ is the vertex dual to the edge $a$, and similarly for the other vertices.
By Lemma \ref{lem:alphaTriangle}, $\alpha(K^Z)=2h_{A^*}\sin\delta$, where $h_{A^*}$ is the length of the altitude of $K^Z$ dropped from $A^*$, and $\delta$ is the angle at the vertex $A^*$.
We now interpret this quantity in terms of the original triangle $K$. 
\begin{figure}
\centering
\begin{tikzpicture}[scale = 5]
\draw[black, line width = 1.5pt] (0.2588,0)--(-0.2588,0)--(0,0.9659)--(0.2588,0);
\tkzDefPoint(0.1294,0.1609){Z};
\tkzDefPoint(0.2588,0){B};
\tkzDefPoint(-0.2588,0){C};
\tkzDefPoint(0,0.9659){A};
\tkzDefPoint(0.1294,-0.3566){Astar};
\tkzDefPoint(0.1294+0.6*0.9695,0.1609+0.6*0.2588){Bstar};
\tkzDefPoint(0.1294-0.2*0.9695,0.1609+0.2*0.2588){Cstar};
\tkzDrawPoints(Astar,Bstar,Cstar);
\tkzDrawSegment[blue,dashed,line width = 1.5pt](Astar,Bstar);
\tkzDrawSegment[blue,dashed,line width = 1.5pt](Bstar,Cstar);
\tkzDrawSegment[blue,dashed,line width = 1.5pt](Cstar,Astar);
\tkzDrawPoints(A,B,C);
\tkzDrawPoints(Z);

\tkzDefPoint(0+1.2*0.1294,0.9659-1.2*0.8050){D};
\tkzDrawPoints(D);
\tkzDrawSegment[green](A,D);
\tkzDefPoint(0+0.9*0.1294,0.9659-0.9*0.8050){E};
\tkzDrawPoint(E);
\tkzDefPoint(0+1.63*0.1294,0.9659-1.63*0.8050){F};
\tkzDrawPoint(F);
\draw[red, dashed](0.1294-0.3*0.8050,-0.3566-0.3*0.1294)--(0.1294+1*0.8050,-0.3566+1*0.1294);
\tkzDrawSegment[green](E,F);
\tkzDefPoint(0.1294-0.73*0.1294,-0.3566+0.73*0.8050){G};
\tkzDrawSegment[red,dashed](Astar,G);
\tkzDrawPoint(G);

\tkzMarkRightAngle[size = 0.05](Astar,G,Cstar);
\tkzMarkRightAngle[size = 0.05](F,E,Bstar);

\node[above] at (0,0.9659){$A$};
\node[left] at (-0.2588,0){$C$};
\node[right] at (0.2588,0){$B$};

\node[below] at (0.1294,-0.3566){$A^*$};
\node[right] at (0.1294+0.6*0.9695,0.1609+0.6*0.2588) {$C^*$};
\node[left] at (0.1294-0.2*0.9695,0.1609+0.2*0.2588) {$B^*$};
\node at (0.1294+0.05,0.1609){$Z$};
\node at (0+1.2*0.1294+0.05,0.9659-1.2*0.8050-0.05){$D$};
\node at (0+0.9*0.1294+0.03,0.9659-0.9*0.8050+0.05){$E$};
\node[below] at (0+1.63*0.1294,0.9659-1.63*0.8050){$F$};
\node[above] at (0.1294-0.73*0.1294,-0.3566+0.73*0.8050){$G$};
\end{tikzpicture}
\caption{Interpreting the shortest billiard trajectory of $K^z$ in terms of $K$.\label{fig:fromDualToOrig}}
\end{figure}
Referring to Figure \ref{fig:fromDualToOrig}, we get $\alpha(K^Z)=2|A^* G|\sin\angle A^*$.
From the construction of $K^Z$ it follows that $\angle BZC=\pi-\angle A^*$, so $\alpha(K^z)=2|A^* G|\sin\angle BZC$.
The length of the segment $A^* G$ is the same as the length of the segment $EF$ parallel to it that passes through $Z$. 
By the construction of the dual triangle, it holds that $|EZ|=\frac{1}{|AZ|}$.
We explain why $|ZF|=\frac{1}{|ZD|}$.
The line $A^*F$ is dual to some point $D'$ on the line containing $ZD$, and $|ZF|=\frac{1}{|ZD'|}$, so it is enough to explain why $D=D'$.
By the construction of the dual, every point of a given line $\ell$ is dual to some line, and $\ell$ itself is dual to the intersection points of all these dual lines. 
Consequently, the point $D'$ is the intersection of all the lines  that are the duals to points on the line containing $A^* F$. 
The line dual to $A^*$ itself is the line containing $BC$, so the point $D'$ must be the intersection point of $BC$ with the line containing $ZD$, and therefore $D=D'$, and $|ZF|=\frac{1}{|ZD|}$.
Overall we get the formula
\begin{equation}\label{eq:geometricFormulaAlphaKz}
\alpha(K^Z)=2\Big(\frac{1}{|AZ|}+\frac{1}{|ZD|}\Big)\sin\angle BZC.
\end{equation}
In this formula we have ``eliminated" the dual, and this expression is now ``intrinsic" in terms of $K$.
Now position the triangle $ABC$ in the complex plane such that the origin is the midpoint of $BC$, and $BC$ lies on the real axis. 
Let $\phi\geq\frac{\pi}{3}$ be the base angle of this triangle. 
Then $B=\cos\phi$, $A=i\sin\phi$, and we write $z$ for the complex coordinate of the point $Z$.
First we compute the length of $AD$. 
Use the sine theorem in the triangle $ACD$.
If $\theta$ is the angle between $AC$ and $AD$, then $|AD|=\frac{\sin\phi}{\sin(\phi+\theta)}$.
On the other hand, by elementary geometry it follows that $\theta=\arg(z-i\sin\phi )-(\phi+\pi)$.
Combining the two, we get that $|AD|=-\frac{\sin\phi|z-i\sin\phi |}{\im(z-i\sin\phi)}$.
Also, we have $|AZ|=|z-i\sin\phi|$, and 
\begin{gather*}|ZD|=|AD|-|AZ|=-|z-i\sin\phi|(1+\frac{\sin\phi}{\im(z-i\sin\phi)})=\\
=-|z-i\sin\phi|\frac{\im z}{\im(z-i\sin\phi)}.\end{gather*}
By the definition of the point $Z$, it does not lie between $B$ and $C$, so $\im z > 0$.
To compute $\sin\angle BZC$, we compute the area of the triangle $BZC$ in two different ways. 
On the one hand, this area is equal to \[\frac{1}{2}|z-\cos\phi||z+\cos\phi|\sin\angle BZC,\] and on the other hand it is equal to $\frac{1}{2}\im z\cdot 2\cos\phi=\im z\cos\phi$.
This implies that $\sin\angle BZC=\frac{2\cos\phi\im z}{|z^2-\cos^2\phi|}$.
Now we can compute $\alpha(K^z)$:
\begin{gather*}\alpha(K^z)=2\frac{1}{|z-i\sin\phi|}\Big(1-\frac{\im(z-i\sin\phi)}{\im z}\Big)\cdot \frac{2\cos\phi\,\im z}{|z^2-\cos^2\phi|}=\\
=\frac{4\cos\phi}{|z^2-\cos^2\phi||z-i\sin\phi|}\cdot\frac{\im z-\im(z-i\sin\phi)}{\im z}\cdot\im z=\\
=\frac{2\sin 2\phi}{|z^2-\cos^2\phi||z-i\sin\phi|}.\end{gather*}
Therefore, we arrived to the following conclusion: if $a,b,c$ are the vertices of an isosceles triangle $K$  with base angle $\phi\geq\frac{\pi}{3}$ and diameter $1$, and $z$ is a point for which $\alpha(K^z)$ is a 3-orbit, then 
\begin{equation}\label{eq:elegantFormula}
\alpha(K^z)=\frac{2\sin(2\phi)}{|(z-a)(z-b)(z-c)|}.
\end{equation}
By Lemma \ref{lem:acutenessZones}, for $K^z$ to be an acute triangle, the point $z$ must be inside the domain $H$ described in Figure \ref{fig:minimizationDomain}.
\begin{figure}
\centering
\begin{tikzpicture}[scale = 3]
\draw[black, line width = 1.5pt] (0.2588,0)--(-0.2588,0)--(0,0.9659)--(0.2588,0);
\tkzDefPoint(0.1294,0.1609){Z};
\tkzDefPoint(0.2588,0){B};
\tkzDefPoint(-0.2588,0){C};
\tkzDefPoint(0,0.9659){A};
\draw[domain = 30:150, smooth, variable = \t, blue] plot({0.2588*cos(\t)},{0.2588*sin(\t)});
\draw[domain = 225:255, smooth, variable = \t, blue] plot({0.1294+0.5*cos(\t)},{0.4829+0.5*sin(\t)});
\draw[domain = -75:-45, smooth, variable = \t, blue] plot({-0.1294+0.5*cos(\t)},{0.4829+0.5*sin(\t)});
\node at (0,0.1) {$H$};
\end{tikzpicture}
\caption{Domain in which $K^z$ is acute. \label{fig:minimizationDomain}}
\end{figure}
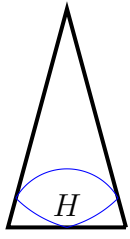
The function $\alpha(K^z)$ is the absolute value of a complex rational function.
It is well known that the modulus of a non-constant non-vanishing holomorphic function does not have local extreme points.
 Thus, the minimum of $\alpha(K^z)$ over $z\in H$ must be attained for $z\in \bd H$.
If $z$ is on the upper circle arc, then $\angle BZC=\frac{\pi}{2}$, and then $\alpha(K^z)=2(\frac{1}{|AZ|}+\frac{1}{|ZD|})$. 
It is readily verified that $\frac{1}{|AZ|}+\frac{1}{|ZD|}\geq\frac{4}{|AD|}$, so $\alpha(K^z)\geq \frac{8}{|AD|}$.
We also have $|AD|\leq 1$ since $1$ is the diameter of the triangle, and therefore $\alpha(K^z)\geq 8$.
Similarly, if $z$ is on the right or left arcs (they are symmetric, so suppose it is on the right arc), then we use the fact that equation \eqref{eq:geometricFormulaAlphaKz} does not depend on the vertex we connect $z$ to (because the formula in Lemma \ref{lem:alphaTriangle} does not depend on the choice of altitude), so we can also write:
\[\alpha(K^z)=2\Big(\frac{1}{|BZ|}+\frac{1}{|ZD'|}\Big)\sin\angle AZC,\]
where $D'$ is the intersection point of $BZ$ and $AC$.
But if $z$ is on the right arc, then $\angle AZC=\frac{\pi}{2}$, so by a similar argument $\alpha(K^z)\geq \frac{8}{|BD'|}\geq 8$.
\end{proof}
The proof of Theorem \ref{thm:santaloPtIdentity} is now complete: whether $\alpha(K^z)$ is realized by a 2-orbit or a 3-orbit, we established inequality \eqref{eq:alphaInequality}, and together with the inequality \eqref{eq:midPointDiam}, and Lemma \ref{lem:dontLookOutside}, the result of Theorem \ref{thm:santaloPtIdentity} is obtained.
\section{Upper bounds for the billiard product}\label{section:polygonOptimization}
In view of Theorem \ref{thm:santaloPtIdentity}, we can simplify the billiard product in the plane to the quantity
\begin{equation}\label{eq:simplerBeta}
\beta(K)=\inf\limits_{z\in\R^2}\alpha(K)\alpha(K^z)=\alpha(K)\inf\limits_{z\in K} \alpha(K^z)=\frac{8\alpha(K)}{\diam(K)}.
\end{equation}
Observe that from Lemma \ref{lem:alphaCont} and the continuity of the diameter it follows that $K\mapsto \beta(K)$ is a continuous function with respect to the Hausdorff topology.
\subsection{Global upper bound for the billiard product}\label{subsec:globalUpperBound}
In this subsection we prove Theorem \ref{thm:santaloAnalogue}, and in the planar case we identify all of the maximizers of the billiard product. 
\begin{proof}[Proof of Theorem \ref{thm:santaloAnalogue}]
As we saw in Section \ref{section:santalopt}, it follows from Lemma \ref{lem:segmentsAndStrips} that
\[\inf_{z\in K}\alpha(K^z)\leq\frac{8}{\diam(K)}.\]
Any diameter of $K$ is a double normal to $K$, so it is a generalized billiard orbit.
Hence, $\alpha(K)\leq 2\,\diam(K)$.
Consequently,
\[\beta(K)=\alpha(K)\inf\limits_{z\in K} \alpha(K^z)\leq 2\,\diam(K)\cdot\frac{8}{\diam(K)} =16.\]
We claim that $\beta(B)=16$ for any ball $B\subseteq\R^n$.
Indeed, if $B$ is a unit ball, then its shortest billiard trajectory is along a diameter, and $\alpha(B)=4$.
If $z\in B$ is at distance $0<r<1$ from the center, then $B^z$ is an ellipsoid of revolution with the semi-axes $\frac{1}{1-r^2}$ and $\sqrt{\frac{1}{1-r^2}}$.
It is know (see, e.g., \cite{GhomiMohammad2004Spbt}) that for an ellipsoid, any shortest billiard trajectory is along the shortest axis, and hence $\alpha(B^z)> 4$.
We see that indeed $\alpha(B)\alpha(B^z)\geq 16$ for all $z\in B$, and the value 16 is clearly attained when $z$ is the center.
Now for the actual equality $\beta(K)=16$ to occur we must have $\alpha(K)=2\,\diam(K)$.
However, any segment of minimal width of $K$ is always a generalized billiard trajectory (it is a double normal), so $\alpha(K)\leq 2 w(K)$, where $w(K)$ denotes the minimal width of $K$.
Hence $\diam(K)\leq w(K)$, but since the diameter is the maximal width, then $\diam(K)=w(K)$ and thus $K$ has constant width.
\end{proof}
In the planar case we can show the converse statement: if $K$ has constant width, then $\beta(K)=16$.
Indeed, by Theorem 4.1 of \cite{BalitskiyAlexey2016Scbt}, in a two-dimensional convex body of constant width, the shortest closed billiard trajectories are 2-orbits. 
The shortest 2-orbit inside a convex body is its minimal width, so if $K$ has constant width, then $\alpha(K)=2w(K)=2\,\diam(K)$, and thus \eqref{eq:simplerBeta} implies that $\beta(K)=16$.

%
\subsection{The billiard product for polygons}\label{subsec:billiardPolygon}

We now try to maximize $\beta(K)$ over some subclasses of convex bodies in $\R^2$, for example, for polygons with at most $n$ vertices. 
Let $\mathcal{P}_n$ denote the set of planar polygons with at most $n$ vertices.
We wish to investigate $\beta_n :=\sup\limits_{P\in \mathcal{P}_n} \beta(P)$.
By equation \eqref{eq:simplerBeta}, maximizing $\beta$ over $\mathcal{P}_n$ is the same as maximizing $\alpha$ over the class of polygons with diameter $1$, and at most $n$ vertices.
Let us begin with an example.
Let $R_n$ be the regular polygon with $n$ vertices. 
The values of $\alpha(R_n)$, assuming $R_n$ is inscribed inside a circle of radius $1$, were computed, e.g. in \cite[Theorem 4.2]{AlkoumiNaeem2015Scbo}:
\[\alpha(R_n)=\begin{cases}
\frac{3\sqrt{3}}{2}, \textrm{ if } n=3 , \\
2(1+\cos\frac{\pi}{n})=4(\cos\frac{\pi}{2n})^2, \textrm{ if } n\geq 5, \textrm{ odd} , \\
4\cos\frac{\pi}{n}, \textrm{ if } n  \textrm{ even} . \\
\end{cases}\]
If $n$ is even, then the diameter of $R_n$ is $2$, and if $n$ is odd, then the diameter of $R_n$ is the longest diagonal  which has the length $2\cos\frac{\pi}{2n}$ (for $n=3$ this is the length of the edge). 
Thus we get the following values:
\[\beta(R_n)=\begin{cases}
12, \textrm{ if } n = 3, \\
16\cos\frac{\pi}{2n}, \textrm{ if } n\geq 5\textrm{ odd},\\
16\cos\frac{\pi}{n}, \textrm{ if } n \textrm{ even} .
\end{cases}\]If 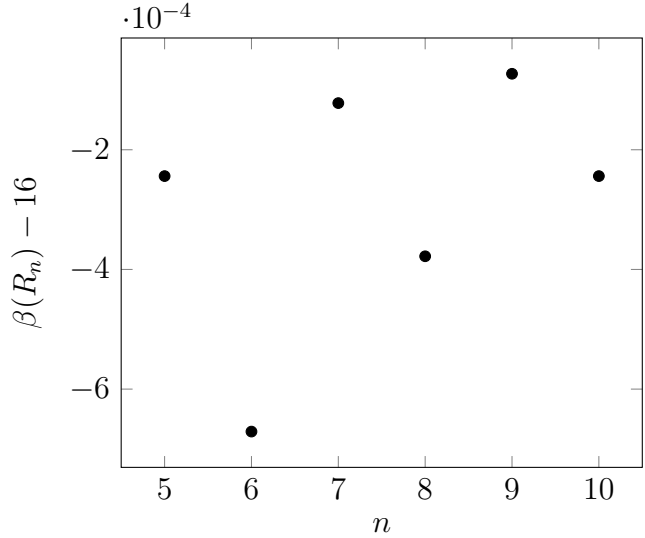
\begin{figure}
\centering
\begin{tikzpicture}
\begin{axis}[xlabel = {$n$}, ylabel = {$\beta(R_n)-16$}]
\addplot[only marks] coordinates {(5,1.9999699*8-16)(6,1.999916*8-16)(7,1.9999846*8-16)(8,1.999953*8-16)(9,1.9999907*8-16)(10,1.9999699*8-16)};
\end{axis}
\end{tikzpicture}
\caption{Behavior of the residuals $\beta(R_n)-16$. \label{fig:betaRn}}
\end{figure} we consider separately odd and even values of $n$, then each of these sequences is monotone, but the value for each odd $n$ is larger than the value for the succeeding even number, so we have $\beta(R_3)>\beta(R_4)$, $\beta(R_5)>\beta(R_6)$, and so on, see Figure \ref{fig:betaRn}.
As shown in Lemma \ref{lem:alphaCont}, $K\mapsto\alpha(K)$ is continuous in the Hausdorff topology. 
Therefore, if we consider a regular $n$-gon with odd $n$, and truncate it arbitrarily close to one of its vertices, we will get an $(n+1)$-gon $K$ with the same diameter as $R_n$ for which $\beta(K)>\beta(R_{n+1})$. 
This means that $\beta_n \geq \max\set{\beta(R_n),\beta(R_{n-1})}$.
In particular, the regular polygons with even number $n$ of vertices do not maximize $\beta$ in the class of polygons with $\leq n$ vertices.
We now prove Theorem \ref{thm:optimalTriangle}, which says that $\beta_3=\beta(R_3)=12$.
\begin{proof}[Proof of Theorem \ref{thm:optimalTriangle}]
Let $T$ be a triangle of diameter $1$ (i.e., its longest edge has length $1$).
Let $\theta$ and $\phi$ be the angles near the longest edge, with $\theta\geq \phi$.
Continue the edge that is near the angle $\phi$ until its length becomes $1$.
This way we get an isosceles triangle $\tilde{T}$ with leg length $1$ and apex angle $\phi$, see Figure \ref{fig:reductionToIsosceles}. 
Using the formula from Lemma \ref{lem:alphaTriangle}, we have $\beta(T)=8\alpha(T)\leq 8\alpha(\tilde{T})=8\cdot(2\cos\frac{\phi}{2}\sin\phi)=32\cos^2\frac{\phi}{2}\sin\frac{\phi}{2}$.
Since $\phi$ is the smallest angle of $T$, we must have $\phi\leq\frac{\pi}{3}$. 
If we write $t=\sin\frac{\phi}{2}$, then $\alpha(\tilde{T})$ is a cubic polynomial in $t$. 
Since $0\leq\phi\leq\frac{\pi}{3}$, the range for $t$ is $0\leq t\leq\frac{1}{2}$. 
It is easily checked that this polynomial increases in this range, and thus the maximal value is attained when $\phi=\frac{\pi}{3}$, which means that $T$ is a regular triangle, and then
\[\beta(T)\leq 32\cdot \frac{3}{4}\cdot\frac{1}{2}=12.\]
\end{proof}
We next turn to discuss other polygons. 
We present a quadrilateral which we conjecture maximizes $\beta$ for quadrilaterals, along with some heuristics as to why this should be true.
 Consider the quadrilateral $K$ with the vertices $(\pm\frac{1}{2},0)$,$(0,\frac{\sqrt{3}}{2})$,$(0,\frac{\sqrt{3}}{2}-1)$, see Figure \ref{fig:theOptimalQuad}.
\begin{figure}
\centering
\begin{tikzpicture}[scale = 2]
\tkzDefPoint(0.5,0){A};
\tkzDefPoint(-0.5,0){B};
\tkzDefPoint(0,0.866){C};
\tkzDefPoint(0,-0.1339){D};
\tkzDefPoint(-0.3415,0.2745){E};
\tkzDefPoint(0.25,0.433){F};
\tkzDefPoint(-0.25,-0.0669){G};
\tkzDrawPoints(A,B,C,D);
\tkzDrawPoints[blue](E,F,G);
\tkzDrawSegment[line width = 1.5pt](A,C);
\tkzDrawSegment[line width = 1.5pt](C,B);
\tkzDrawSegment[line width = 1.5pt](B,D);
\tkzDrawSegment[line width = 1.5pt](D,A);
\tkzDrawSegment[blue](E,F);
\tkzDrawSegment[blue](F,G);
\tkzDrawSegment[blue](G,E);
\end{tikzpicture}
\caption{The conjectured quadrilateral that maximizes $\beta$, and its shortest billiard trajectory (in blue). \label{fig:theOptimalQuad}}
\end{figure}
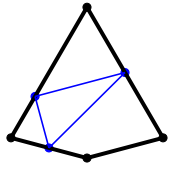
This quadrilateral is obtained by taking a regular triangle with side length $1$, and extending one of its altitudes until it will have length one. 
The result is a quadrilateral that contains the regular triangle, but still has diameter $1$, so this operation increases the billiard product.
We compute the length of its shortest billiard trajectory (using the method of \cite{AlkoumiNaeem2015Scbo}, and the formula from Lemma \ref{lem:alphaTriangle}), and get $\alpha(K)=\sqrt{3}\cos\frac{\pi}{12}\approx 1.67$, so $\beta(K)=8\alpha(K)\approx 13.3>12$.
Therefore this is a quadrilateral for which $\beta(K)>\beta_3$, which shows that $\beta_4>\beta_3$. 
We conjecture that this is a maximizer of $\beta$ in $\mathcal{P}_4$.

As promised, here are some heuristic arguments that support the maximality of this polygon.
Suppose $Q$ is any quadrilateral with diameter $1$.
Its diameter can be attained either at an edge, or at a diagonal.
First we can consider the case when an edge has length $1$.
Position $Q$ so that the vertices of the edge $AB$ of length $1$ are $(\pm\frac{1}{2},0)$.
The other two vertices, $C$, $D$, must lie on the same side of the $x$-axis, and inside the disks $(x\pm\frac{1}{2})^2+y^2\leq 1$, see Figure \ref{fig:edgeCase}.
\begin{figure}
\centering
\begin{subfigure}[b]{0.4\textwidth}
\centering
\begin{tikzpicture}[scale = 2]
\tkzDefPoint(-0.5,0){A};
\tkzDefPoint(0.5,0){B};
\tkzDrawPoints(A,B);
\tkzDrawSegment(A,B);
\draw[domain = 0:60, smooth, variable = \t] plot ({-0.5+cos(\t)},{sin(\t)});
\draw[domain = 120:180, smooth, variable = \t] plot ({0.5+cos(\t)},{sin(\t)});
\tkzDefPoint(0.25,0.2){C};
\tkzDefPoint(-0.3,0.3){D};
\tkzDrawPoints(C,D);
\tkzDefPoint(0.4662,0.2576){CC};
\tkzDefPoint(-0.4363,0.3511){DD};
\tkzDrawPoints(CC,DD);
\tkzDrawSegment(B,C);
\tkzDrawSegment(C,D);
\tkzDrawSegment(D,A);
\tkzDrawSegment[dashed,blue](B,CC);
\tkzDrawSegment[dashed,blue](CC,DD);
\tkzDrawSegment[dashed,blue](DD,A);

\node[below] at (-0.5,0) {$A$};
\node[below] at (0.5,0) {$B$};
\node[below] at (0.25,0.2) {$C$};
\node[below] at (-0.3,0.3) {$D$};
\node[right] at (0.46,0.25) {$C'$};
\node[left] at (-0.43, 0.35){$D'$};
\end{tikzpicture}
\caption{The diameter is realized by an edge. \label{fig:edgeCase}}
\end{subfigure}
\hspace{1em}
\begin{subfigure}[b]{0.4\textwidth}
\centering
\begin{tikzpicture}[scale = 2]
\tkzDefPoint(-0.5,0){A};
\tkzDefPoint(0.5,0){C};
\draw[domain = -60:60, smooth, variable = \t] plot ({-0.5+cos(\t)},{sin(\t)});
\draw[domain = 120:240, smooth, variable = \t] plot ({0.5+cos(\t)},{sin(\t)});
\tkzDefPoint(0.35,-0.3){B};
\tkzDefPoint(-0.3,0.3){D};
\tkzDrawPoints(C,D);
\tkzDrawPoints(A,B);
\tkzDrawSegment(B,C);
\tkzDrawSegment(C,D);
\tkzDrawSegment(D,A);
\tkzDrawSegment(A,B);


\node[left] at (-0.5,0) {$A$};
\node[right] at (0.5,0) {$C$};
\node[below] at (0.3,-0.3) {$B$};
\node[above] at (-0.3,0.3) {$D$};
\tkzDrawSegment[dashed](B,D);
\node[right] at (0,0) {$1$};
\end{tikzpicture}
\caption{The diameter is realized by a diagonal. \label{fig:diagonalCase}}
\end{subfigure}
\caption{Optimizing $\beta$ for quadrilaterals.\label{fig:optimalQuad}}
\end{figure}
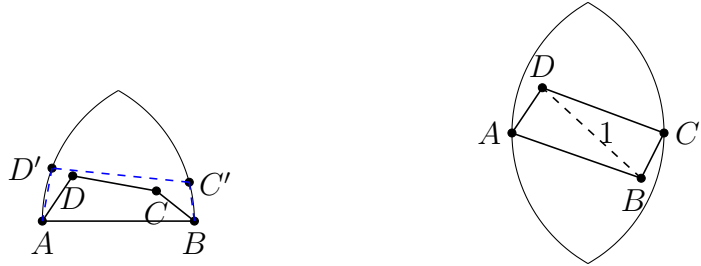
Note that the distance between any two points in the domain above the $x$ axis enclosed by these two circle arcs is at most $1$. 
Hence, we can always extend the diagonals $AC$ and $BD$ to segments $AC'$ and $BD'$ that will have length $1$, or until both points reach the boundary arc of circle (it is also possible that the points $C'$ and $D'$ will be on the same arc of circle, and then one of the diagonal lengths will be shorter than $1$). 
The obtained quadrilateral, $\tilde{Q}$, contains $Q$, and still has diameter $1$, and hence $\beta(Q)\leq\beta(\tilde{Q})$.
One can compute the value of $\alpha(\tilde{Q})$ as an explicit function of the angles $\angle CAB$, $\angle DBA$. 
In some cases it is possible to prove the inequality $\alpha(\tilde{Q})\leq\sqrt{3}\cos(\pi\slash 12)$ analytically, but in some cases we only have strong numerical evidence for this inequality.

The other case is where a diagonal of $Q$ has length $1$. 
If we call the vertices of this diagonal $A$ and $C$, and position them at $(\pm\frac{1}{2},0)$, then the remaining vertices need to be in the domain $(x\pm\frac{1}{2})^2+y^2 \leq 1$, and must lie on opposite sides of the $x$-axis, see Figure \ref{fig:diagonalCase}.
We can again try to extend the segments $AB$ and $CD$ until they will either have length $1$, or the points $B$ and $D$ will be on the boundary arcs of circle, but it may be possible that the length of $BD$ will become $1$ before either of these conditions is fulfilled (for example, if $ABCD$ are the vertices of a square with diagonal $1$). 
Hence, the space of such quadrilaterals depends on three parameters, and direct analysis of this situation is even more complicated.
However, even in this case, numerical analysis still supports the inequality $\alpha(Q)\leq \sqrt{3}\cos(\pi\slash 12)$.
\subsection{Steiner symmetrization and the billiard product}\label{subsec:steinerSymm}
In this subsection we discuss how the Steiner symmetrization affects the billiard product.
First, we recall the definition:
Given a convex body $K\subseteq\R^n$ and a hyperplane $H$, consider all the lines orthogonal to $H$ that have a non-empty intersection with $K$ (this intersection can be either a point or a segment).
The Steiner symmetrization of $K$ with respect to $H$, denoted by $S_H(K)$, is obtained by shifting all those segments to have their midpoint on $H$.
It is known that $\diam(K)\geq\diam(S_H(K))$.
For more details on the Steiner symmetrization, see e.g., \cite{EgglestonHaroldGordon1969C, Artstein-AvidanShiri2015AGAP}.

In general, Steiner symmetrization can decrease the billiard product, as we demonstrate in the following example. 
Let $K$ be the triangle with vertices $(\pm 1,0)$,$(0,1)$. 
Then $K$ is right angled, and $\alpha(K)=2$.
The diameter of $K$ is also $2$, so from \eqref{eq:simplerBeta}, $\beta(K)=8$.
Let $\l$ be the $x$-axis. 
One can check that $S_\l (K)$ is the rhombus with vertices $(\pm 1,0)$, $(0,\pm \frac{1}{2})$.
The closed billiard trajectories in this rhombus are the altitudes from the vertices $(0,\pm\frac{1}{2})$ to the opposite edges, and their lengths are equal to $2\cdot\frac{2}{\sqrt{5}}<2$, and thus $\alpha(S_\l(K))<\alpha(K)$. 
The diameter of $S_\l(K)$ is still $2$, so 
\[\beta(S_\l(K))=\frac{8\alpha(S_\l(K))}{\diam(S_\l(K))}<\frac{8\alpha(K)}{\diam(K)}=\beta(K),\]
and we see that this symmetrization decreased the billiard product.
However, the next claim shows that in any triangle we can find a direction in which the Steiner symmetrization increases the billiard product.
\begin{claim}\label{claim:steinerBettaTriangle}
Let $T$ be a triangle, and $\l$ be one of its altitudes. 
Denote by $S_\l$ the Steiner symmetrization with respect to $\l$.
Then 
\[\beta(T)\leq \beta(S_\l (T)).\]
\end{claim}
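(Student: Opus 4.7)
The strategy is to leverage the simplified formula $\beta(K) = 8\alpha(K)/\diam(K)$ from equation \eqref{eq:simplerBeta}. Since Steiner symmetrization never increases the diameter, we have $\diam(S_\l(T)) \leq \diam(T)$, so it suffices to prove $\alpha(S_\l(T)) \geq \alpha(T)$. I would first identify $S_\l(T)$ geometrically: if $A$ is the vertex from which $\l$ drops, with altitude length $h$ and opposite side of length $B$, a direct computation of horizontal cross-sections shows that $S_\l(T)$ is the isosceles triangle with apex $A$, base of length $B$ centered on $\l$, and altitude $h$ from $A$. Applying Lemma \ref{lem:alphaTriangle} to this explicit isosceles triangle then yields $\alpha(S_\l(T)) = 2h$ if $h \leq B/2$ (apex angle obtuse or right) and $\alpha(S_\l(T)) = 2Bh^2/((B/2)^2+h^2)$ if $h > B/2$ (apex angle acute).

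The next step is the universal bound $\alpha(T) \leq 2h$, which I would obtain directly from Lemma \ref{lem:alphaTriangle}: in the acute case $\alpha(T) = 2h\sin A \leq 2h$; if $T$ is obtuse or right at $A$ then $\alpha(T) = 2h$ exactly; and if $T$ is obtuse at another vertex $V$, then $V$ lies opposite the longest side of $T$, so its altitude $h_V$ is the shortest of the three (since $a \cdot h_a$ is constant across sides), giving $\alpha(T) = 2h_V \leq 2h$. The regime $h \leq B/2$ is then handled immediately, since $\alpha(S_\l(T)) = 2h \geq \alpha(T)$.

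The substantive case is $h > B/2$. A direct check with the law of cosines shows that in this regime $T$ cannot be obtuse or right at $A$ (an obtuse angle at $A$ would force $h^2 \leq a(B-a) \leq B^2/4$), so $T$ is either acute or obtuse at some non-apex vertex. If $T$ is acute, I would use the formula $\alpha = 2\,\mathrm{area}/R$, with $R$ the circumradius. Since $T$ and $S_\l(T)$ share the area $Bh/2$, the inequality $\alpha(T) \leq \alpha(S_\l(T))$ becomes $R(T) \geq R(S_\l(T))$, which reduces to $|AP|\cdot|AQ| \geq (B/2)^2 + h^2$. The main algebraic step is the identity
\[
(a^2+h^2)((B-a)^2+h^2) = (h^2 - a(B-a))^2 + h^2 B^2,
\]
where $a$ denotes the horizontal displacement of the altitude's foot from $P$; since $h^2 > B^2/4 \geq a(B-a)$, the right-hand side is minimized when $a(B-a)$ is maximized (i.e.\ at $a = B/2$), where it equals exactly $((B/2)^2+h^2)^2$.

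It remains to treat $T$ obtuse at a non-apex vertex (without loss of generality, $P$) with $h > B/2$. Here $\alpha(T) = Bh/|AQ|$, so the inequality becomes $((B/2)^2+h^2)^2 \leq 4h^2 |AQ|^2$. Since $P$ being obtuse forces the foot of the altitude from $A$ to lie outside the segment $PQ$, one has $|AQ|^2 > B^2+h^2$; combined with the bound $(B/2)^2 + h^2 < 2h^2$ (which follows from $h > B/2$), this yields $((B/2)^2+h^2)^2 < 4h^4 \leq 4h^2(B^2+h^2)$, as required. I anticipate that the main obstacle will be organizing the case analysis cleanly, pairing the three possible shapes of $T$ (acute, obtuse at $A$, obtuse at a non-apex) with the two regimes of $S_\l(T)$ (acute or obtuse at the apex); once this skeleton is in place, the underlying algebra summarized above is routine.
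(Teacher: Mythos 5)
Your overall strategy is sound and reaches the same reduction as the paper (via \eqref{eq:simplerBeta} and diameter monotonicity, it suffices to show $\alpha(S_\l(T))\geq\alpha(T)$), but you organize the argument differently. The paper splits into six cases according to whether $\l$ is an inner or outer altitude and whether $T$ and $S_\l(T)$ are acute or obtuse, showing one combination is impossible; you instead split by the two regimes of the symmetrized isosceles triangle ($h\leq B/2$ versus $h>B/2$) and dispatch the entire first regime at once with the universal bound $\alpha(T)\leq 2h$, which absorbs three of the paper's cases in one stroke. Your treatment of the acute--acute case via the circumradius formula $\alpha=2\,\mathrm{area}/R$ and the identity $(a^2+h^2)((B-a)^2+h^2)=(h^2-a(B-a))^2+h^2B^2$ is a genuinely different (and clean) route from the paper's Case 2d, which instead maximizes the apex angle $\theta(x)=\arctan\frac{x}{h}+\arctan\frac{L-x}{h}$ over the position of the foot of the altitude; the two are equivalent in content (comparing $R$ at fixed opposite side is comparing $\sin\theta$) but your identity makes the extremality at $a=B/2$ transparent.

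There is one concrete slip, in the final sub-case ($T$ obtuse at a base vertex $P$, with $h>B/2$). There $\alpha(T)$ is \emph{twice} the altitude from $P$, so $\alpha(T)=2\cdot\frac{2\,\mathrm{area}(T)}{|AQ|}=\frac{2Bh}{|AQ|}$, not $\frac{Bh}{|AQ|}$. Consequently the inequality you must prove is $\bigl((B/2)^2+h^2\bigr)^2\leq h^2|AQ|^2$, which is four times stronger than the $\leq 4h^2|AQ|^2$ you target, and your chain $\bigl((B/2)^2+h^2\bigr)^2<4h^4\leq 4h^2(B^2+h^2)$ does not yield it (for large $h$ one has $4h^4>h^2(B^2+h^2)$). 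The repair uses exactly the two bounds you already isolated: with $t=B/(2h)<1$ and $|AQ|^2>B^2+h^2$, the needed inequality reduces, after dividing by $h^4$, to $(t^2+1)^2\leq 4t^2+1$, i.e.\ $t^4\leq 2t^2$, which holds for $t\leq 1$. (This corrected computation is precisely the paper's Case 1b, where the same estimate $b>\sqrt{a^2+h^2}$ on the longest edge appears.) With that factor of $2$ restored, your proof is complete.
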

If it is possible to generalize this claim to other bodies, then it might also allow to investigate maximizers of $\beta$ by means of symmetrization.
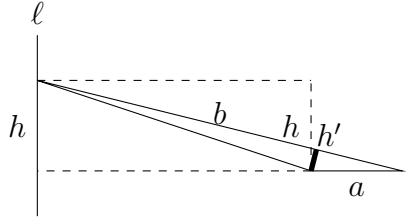
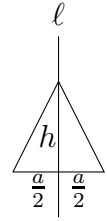
\begin{figure}
\centering
\begin{subfigure}[b]{0.4\textwidth}
\centering
\begin{tikzpicture}[scale = 1.2]
\draw(0,0)--(1,0)--(-3,1)--(0,0);
\draw[dashed](0,0)--(-3,0);
\draw(-3,-0.5)--(-3,1.5);
\draw[dashed](0,0)--(0,1);
\draw[line width = 2pt](0,0)--(0.0588,0.2352);
\draw[dashed](-3,1)--(0,1);

\node[below] at (0.5,0){$a$};
\node[left] at (-3,0.5) {$h$};
\node[left] at (0,0.5) {$h$};
\node[above] at (-1,0.4) {$b$};
\node at (0.2,0.4) {$h'$};
\node[above] at (-3,1.5) {$\l$};
\end{tikzpicture}
\caption{The obtuse triangle $T$.
The altitude $h'$ is shorter than the altitude $h$. \label{fig:obtuseSteinerLeft}}
\end{subfigure}
\hspace{1em}
\begin{subfigure}[b]{0.4\textwidth}
\centering
\begin{tikzpicture}[scale = 1.2]
\draw(0,0)--(1,0)--(0.5,1)--(0,0);
\draw(0.5,-0.5)--(0.5,1.5);

\node[left] at (0.5,-0.2) {$\frac{a}{2}$};
\node[right] at (0.5,-0.2) {$\frac{a}{2}$};
\node[above] at (0.5,1.5) {$\l$};
\node[left] at (0.6,0.4) {$h$};
\end{tikzpicture}
\caption{The Steiner symmetrization $S_\l(T)$ is acute. \label{fig:obtuseSteinerRight}}
\end{subfigure}
\caption{Steiner symmetrization for an obtuse triangle.\label{fig:obtuseSteiner}}
\end{figure}
\begin{proof}
First observe that the symmetrization of a triangle with respect to an altitude always gives an isosceles triangle.
The proof examines separately the following cases: whether $T$ or $S_\l(T)$ are acute or not, and whether $\l$ is an inner or outer altitude in $T$.
For simplicity, when we write below ``obtuse" we mean ``either obtuse or right angled".
Also, in view of \eqref{eq:simplerBeta} and the fact that Steiner symmetrization does not increase the diameter, it is enough to show that in all the listed cases $\alpha(T)\leq\alpha(S_\l(T))$.\\
\textbf{Case 1:} $\l$ is an outer altitude.
In this case $T$ cannot be acute, so we have only two cases to check, according to whether $S_\l (T)$ is acute or not.\\
\textbf{Case 1a:} the triangle $S_\l(T)$ is obtuse.
Denote by $h'$ the length of the inner altitude in $T$, and by $h$ the length of the altitude $\l$ in $T$.
By the definition of the symmetrization, it follows that $\l$ is now an inner altitude in $S_\l (T)$, and the length of that altitude is still $h$. 
Since both $T$ and $S_\l (T)$ are obtuse, $\alpha(T)=2h'$ and $\alpha(S_\l(T))=2h$.
From elementary geometry it follows that $h'\leq h$ (see Figure \ref{fig:obtuseSteinerLeft}), so we have the desired inequality.\\
\textbf{Case 1b:} the triangle $S_\l(T)$ is acute.
Denote the length of the altitude of $T$ that lies on $\l$ by $h$, and the length of the inner altitude of $T$ (dropped from the obtuse angle) by $h'$.
Denote by $a$ the length of the side of $T$ the altitude $h$ is dropped to, and by $b$ the length of the longest edge of $T$, see Figure \ref{fig:obtuseSteiner}. 
Then $\alpha(T)=2h'$, from the formula for an area of a triangle $h'b=ha$, and since $T$ is obtuse, $b>\sqrt{a^2+h^2}$.
From here it follows that 
\[\alpha(T)=2h'=\frac{2ha}{b}<\frac{2ha}{\sqrt{a^2+h^2}}.\]
On the other hand, by the properties of symmetrization, $S_\l(T)$ is an isosceles triangle with base length $a$ and height $h$.
The fact that $S_\l(T)$ is acute means that $a<2h$.
By Lemma \ref{lem:alphaTriangle}
\[\alpha(S_\l(T))=2h\sin\theta,\]
where $\theta$ is the apex angle, and $\tan\frac{\theta}{2}=\frac{a}{2h}$.
Using the identity $\sin x=\frac{2\tan \frac{x}{2}}{\tan^2\frac{x}{2}+1}$, one can see that 
$\alpha(S_\l(T))=\frac{8ah^2}{a^2+4h^2}$.
Therefore, we only need to show that if $\frac{a}{2h}<1$ then
\[\frac{2ah}{\sqrt{a^2+h^2}} \leq \frac{8ah^2}{a^2+4h^2}.\]
If we write $t=\frac{a}{2h}$, then this inequality is equivalent to
\[\frac{4ht}{\sqrt{4t^2+1}}\leq\frac{4ht}{t^2+1},\]
which clearly holds for $0\leq t\leq 1$; this proves that $\alpha(T)\leq\alpha(S_\l(T))$.\\
\textbf{Case 2:} $\l$ is an inner altitude, in which case we have four sub-cases.\\
\textbf{Case 2a:} the triangles $T$ and $S_\l(T)$ are obtuse. 
In this case, $\l$ is an inner altitude both in $T$ and in $S_\l (T)$, and since both of these triangles are obtuse, then $\alpha(T)=\alpha(S_\l(T))$.\\
\textbf{Case 2b:} the triangle $T$ is obtuse and the triangle $S_\l (T)$ is acute.
We show that this case is actually impossible.
Consider the perpendicular bisector to the longest side of $T$.
This line is parallel to $\l$, so the Steiner symmetrization with respect to it, gives a translate of $S_\l(T)$, so we can replace $\l$ by the perpendicular bisector.
The triangle $T$ is obtuse.
This means that the vertex of the obtuse angle is inside a semi-disk the diameter of which is the longest edge.
Consider the chord in that disk which is parallel to the diameter and passes through the third vertex.
After symmetrization, the new third vertex will be in the midpoint of that chord, and hence again inside that semi-disk, making $S_\l (T)$ obtuse again, see Figure \ref{fig:obtuseAcuteImpossible}. \\
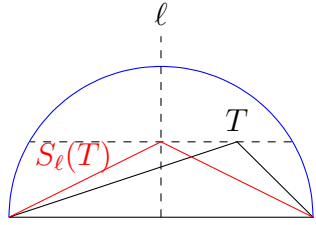
\begin{figure}
\centering
\begin{tikzpicture}[scale = 2]
\draw(-1,0)--(1,0)--(0.5,0.5)--(-1,0);
\draw[dashed] (0.86,0.5)--(-0.86,0.5);
\draw[domain = 0:180, smooth,blue, variable = \t] plot ({cos(\t)},{sin(\t)});
\draw[dashed](0,0)--(0,1.2);
\draw[red](1,0)--(0,0.5)--(-1,0);

\node[above] at (0,1.2) {$\l$};
\node[above] at (0.5,0.5) {$T$};
\node[left,red] at (-0.25,0.4) {$S_\l(T)$};
\end{tikzpicture}
\caption{Steiner symmetrization of an obtuse triangle in the direction of the inner altitude cannot be acute. \label{fig:obtuseAcuteImpossible}}
\end{figure}
\textbf{Case 2c:} the triangle $T$ is acute and the triangle $S_\l (T)$ is obtuse.
The line $\l$ is an inner altitude both in $T$ and in $S_\l(T)$, and the length of the altitude $h$ is the same in both triangles.
Therefore, by Lemma \ref{lem:alphaTriangle}, $\alpha(T)=2h\sin\theta$ and $\alpha(S_\l(T))=2h$, where $\theta$ is the angle at the vertex from which the altitude is dropped.
Consequently, $\alpha(T)\leq\alpha(S_\l(T))$.\\
\textbf{Case 2d:} the triangles $T$ and $S_\l(T)$ are acute.
The line $\l$ is an altitude both in $T$ and in $S_\l(T)$, and the length of the altitude, $h$, is the same in both triangles. 
Denote by $L$ the length of the edge to which this altitude is dropped.
This length is the same both in $T$ and in $S_\l(T)$.
Denote by $\theta$ the angle at the vertex from which the altitude is dropped in $T$, and by $\theta'$ the corresponding angle in $S_\l(T)$.
The triangle $S_\l(T)$ is isosceles, with $\theta'$ being the apex angle, so $\theta'=2\arctan\frac{L}{2h}$.
In $T$ the altitude divides the edge of length $L$ into segments of length $x$ and $L-x$, and thus $\theta(x)=\arctan\frac{x}{h}+\arctan\frac{L-x}{h}$, see Figure \ref{fig:acuteAcuteSymmetrization}.
It is elementary to check that for $0\leq x\leq L$, the function $\theta(x)$ reaches its maximum strictly at $x=\frac{L}{2}$, so $\theta\leq\theta'$.	
Then, from Lemma \ref{lem:alphaTriangle} it follows that $\alpha(T)=2h\sin\theta\leq 2h\sin\theta'=\alpha(S_\l(T))$. 
\begin{figure}
\centering
\begin{tikzpicture}[scale = 1.5]
\draw(0,0)--(1,0)--(0.8,2)--(0,0);
\draw(0.8,-0.1)--(0.8,2.1);
\draw[dashed,red](0.3,0)--(0.8,2)--(1.3,0)--(1,0);

\node[left] at (0,1) {$T$};
\node[right,red] at (1.3,1) {$S_\l(T)$};
\node[above] at (0.8,2.1) {$\l$};
\node[left] at (0.8,1) {$h$};
\node[below] at (0.2,0) {$L-x$};
\node[below] at (0.9,0) {$x$};
\end{tikzpicture}
\caption{The triangles $T$ and $S_\l(T)$ are both acute. \label{fig:acuteAcuteSymmetrization}}
\end{figure}
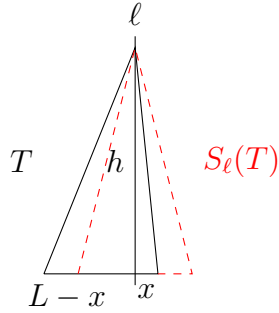
\end{proof}

Now, we present an alternative proof of Theorem \ref{thm:optimalTriangle} using Steiner symmetrization.
\begin{proof}[Proof of Theorem \ref{thm:optimalTriangle} using Steiner symmetrization]
Let $T$ be an arbitrary triangle in $\R^2$. 
As we explained, if $\l$ is an altitude of $T$ then $S_\l(T)$ is an isosceles triangle and $\beta(T)\leq \beta(S_\l(T))$.
Thus, we may assume that $T_0=T$ is isosceles, and is shifted so that the apex vertex is the origin.
Write $\l_0$ for one of the altitudes of $T_0$ which is not dropped from the apex, and let $T_1$ be the shift of $S_{\l_0}(T_0)$ for which the apex vertex is the origin. 
Repeat this construction to obtain a sequence of isosceles triangles $T_k$ such that for all $k$, $T_{k+1}$ is obtained by a shift from $S_{\l_{k}}(T_{k})$, where $\l_k$ is one of the altitudes of $T_k$ that is not dropped from the apex, and $T_{k+1}$ is shifted to have its apex vertex at the origin.
By Claim \ref{claim:steinerBettaTriangle}, for all $k$, $\beta(T_k)\leq \beta(T_{k+1})$.
We will show that the sequence $\set{T_k}$ has a subsequence that converges in the Hausdorff topology to some set $K$, and that this set $K$ must necessarily be a regular triangle. 
Since $\beta$ is continuous, then we would get $\beta(T)=\beta(T_0)\leq\beta(K)$, seeing how $\beta(K)$ is the limit of a subsequence of the monotone sequence $\beta(T_k)$. 
Thus we indeed get that the the billiard product of a triangle is bounded from above by that of a regular triangle.

Since we have $\diam(S_{\l_k}(T_k))\leq\diam(T_k)$, it follows that all the triangles $T_k$ are contained in the disk of radius $\diam(T_0)$ around the origin.
Thus we can use the Blaschke selection theorem, and conclude that there exists a subsequence of $\set{T_k}$ that converges to some convex body, $K$.
For convenience, we keep denoting this subsequence by $T_k$. 
Also, since Steiner symmetrization does not change the area, then all the triangles $T_k$ have the same area, call it $S$, and as a result, $K$ must also have the same area. 

\underline{Step 1:} We show that $K$ is a triangle.
Denote the vertices of $T_k$ by $a_k$, $b_k$, and $c_k$, where $a_k=0$ is the apex vertex. 
Then those sequences are contained in a closed disk, and thus, passing (iteratively) to a subsequence, they converge.
Again, for convience we may assume that the original sequences converge.
Write $a=0$, $b$, $c$ for the limits of those sequences.
Since the sequence $T_k$ converges to $K$, and $K$ is compact, then it follows that the points $a$, $b$, and $c$ are in $K$.
As a result, the convex hull of $a$, $b$, $c$, call it $K'$, is contained in $K$.
The area of a triangle can be written with an explicit formula in terms of its vertices, and hence the area of the triangle $K'$ is the limit of the areas of the triangles $T_k$, which is $S$.
The conclusion is the following: $K$ and $K'$ are convex bodies with the same area, and for which $K' \subseteq K$. 
This implies that $K=K'$, so $K$ is indeed a triangle, and its vertices are $a$, $b$, $c$.

\underline{Step 2:} We show that $K$ is a regular triangle.
For each $k$, we have the equality $|a_k-b_k| = |a_k-c_k|$.
Passing to the limit, we get $|a-b|=|a-c|$, so $K$ is an isosceles triangle, and its area is $S$.
If we show that the length of the base is $2\sqrt{S}\slash\sqrt[4]{3}$, then the height of the triangle must be $\sqrt[4]{3}\sqrt{S}$, and then elementary trigonometry would imply that the base angle is $\frac{\pi}{3}$, so $K$ is indeed a regular triangle.
Write $x_k$ for the length of the base of the triangle $T_k$ (here $k$ denotes the index of the original sequence, before taking any subsequences).
Note that from the properties of Steiner symmetrization, the leg length of $T_k$ is exactly $x_{k+1}$. 
This, together with the fact that the area of $T_k$ is $S$, and the Pythagorean theorem, shows that $x_k$ satisfies the following recursive relation:
\[x_{k+1}^2 = \frac{4S^2}{x_k^2}+\frac{x_k^2}{4}.\]
Write $y_k=x_k^2$ and $f(x)=\frac{4S^2}{x}+\frac{x}{4}$.
If $y_k$ converges, then the limit must be a positive fixed point of $f$, and this must be $L:=\frac{4S^2}{\sqrt{3}}$.
As a reuslt, we will get that $x_k$ converges to $\sqrt{L}=2\sqrt{S}\slash\sqrt[4]{3}$, as required.
It is simple to see that $f((0,\infty))=[2S,\infty)$, and that for $x \geq 2S$, we have $|f'(x)|\leq \frac{3}{4}<1$. 
This means that the ray $[2S,\infty)$ is contained inside the basin of attraction of the fixed point $L$, so for any initial condition $y_0$, we will have that $y_1=f(y_0)$ is inside the basin of attraction, an hence $y_k$ converges to $L$.

Lastly, we shall address the case of equality. 
Suppose that the original triangle $T$ is not regular.
If we examine carefully the proof of Claim \ref{claim:steinerBettaTriangle}, then we see that the inequality $\beta(T)\leq \beta(S_{\l}(T))$ proved there is strict, except maybe for two cases: both $T$ and $S_{\l}(T)$ are obtuse (case 2a), or if $T$ and $S_{\l}(T)$ are acute, and the altitude $\l$ is also a median (case 2d).
In the sequence we constructed, all triangles $T_k$ for $k\geq 2$ are acute.
If $T_2$ is not a regular triangle, then we will have a strict inequality,
\[\beta(T)\leq \beta(T_2)<\beta(T_3)\leq 12.\]
If $T_2$ is regular, and the original triangle $T$ is not, then necessarily one of the two symmetrization steps done so far must have increased the billiard product, so in any case the inequality is strict.
\end{proof}

\bibliography{bibliography}
\bibliographystyle{abbrv}
\end{document}